\documentclass[12pt]{article}
\usepackage{amssymb}
\usepackage{amsfonts}
\usepackage{amsmath}
\usepackage[usenames]{color}
\usepackage{mathrsfs}
\usepackage{amsfonts}
\usepackage{amssymb,amsmath}
\usepackage{CJK}
\usepackage{cite}
\usepackage{cases}
\usepackage{amsthm}
\usepackage{verbatim}

\pagestyle{plain}
\oddsidemargin -25pt
\evensidemargin -25pt
\topmargin -40pt
\textwidth 6.5truein
\textheight 9.35truein
\parskip .01 truein
\baselineskip 6pt

\def\cl{\centerline}

\def\vs{\vspace*}

\def\L{\mathcal{L}}
\def\H{\mathcal{H}}

\def\Z{\mathbb{Z}}
\def\N{\mathbb{N}}

\def\C{\mathbb{C}}

\numberwithin{equation}{section}
\newtheorem{theo}{Theorem}[section]

\newtheorem{coro}[theo]{Corollary}
\newtheorem{lemm}[theo]{Lemma}

\newtheorem{prop}[theo]{Proposition}

\newtheorem{remark}[theo]{Remark}

\begin{document}
\begin{center}
{\large\bf New irreducible non-weight Virasoro modules \\ from tensor products}
\end{center}
\cl{Haibo Chen, Jianzhi Han, Yanyong Hong and Yucai Su}

\footnote{$^{*}$ Corresponding author.  Haibo Chen, School of  Statistics and Mathematics, Shanghai Lixin University of  Accounting and Finance,   Shanghai
201209, China, e-mail: hypo1025@163.com}
\footnote{
Jianzhi Han,
 School of Mathematical Sciences, Tongji University, Shanghai 200092, China,
e-mail:  jzhan@tongji.edu.cn}
\footnote{ Yanyong Hong,
School of Science, Hangzhou Normal University, Hangzhou 311300, China,
e-mail:  hongyanyong2008@yahoo.com}
\footnote{Yucai Su, School of Mathematical Sciences, Tongji University, Shanghai
200092, China,
 e-mail: ycsu@tongji.edu.cn}

\noindent{{\bf Abstract:}  The  non-weight tensor product  Virasoro  modules
$\mathcal{M}(V,\lambda_0)\otimes\bigotimes_{i=1}^m\Omega(\lambda_i,\alpha_i)$ are studied in this paper,
where $\Omega(\lambda_i,\alpha_i)$  and $\mathcal{M}(V,\lambda_0)$ (with $\lambda_0,\lambda_i,\alpha_i\in\C^*$ and $V$ being an irreducible module over certain finite dimensional Lie algebra related to  the Virasoro algebra)  are
irreducible Virasoro modules respectively defined in \cite{LZ2} and \cite{LZ}.
The necessary and sufficient condition for $\mathcal{M}(V,\lambda_0)\otimes\bigotimes_{i=1}^m\Omega(\lambda_i,\alpha_i)$ to be irreducible is obtained. Then we determine the necessary and sufficient condition for two such irreducible  modules to be isomorphic.
At last, we show that the irreducible $\L$-modules $\mathcal{M}(V,\lambda_0)\otimes\bigotimes_{i=1}^m\Omega(\lambda_i,\alpha_i)$    are new for $m>1$.}
\vs{5pt}

 \noindent{{\bf Key words:} Virasoro algebra,  tensor product  module,   irreducible module}

\noindent{{\bf Mathematics Subject Classification (2020):} 17B10 $\cdot$ 17B65 $\cdot$ 17B68}

\section{Introduction}
The  {\em  Virasoro algebra} $\L$     is
 an infinite-dimensional complex Lie algebra
with the basis  $\{L_m,C
\mid m\in \Z\}$ and  the  Lie bracket defined as follows:
\begin{equation*}
[L_m,L_n]= (n-m)L_{m+n}+\delta_{m+n,0}\frac{m^{3}-m}{12}C\ {\rm and}\ [L_m,C]=0\ {\rm for}\ m,n\in\Z,
\end{equation*}
which is a one-dimensional central extension of the Witt algebra.
It  is well-known that  $\L$ is a very important   infinite-dimensional Lie algebra   both in mathematics and mathematical  physics (see, e.g., \cite{IK,LL,KR}).

The theory of  weight modules over $\L$ has been well developed (see, e.g., \cite{IK}).
The most important class of weight modules are the  highest weight modules, which  depend on the triangular decomposition structure of $\L$.
  In fact, any irreducible  weight module over the Virasoro algebra with  a nonzero finite-dimensional weight space is a  Harish-Chandra module (see \cite{MZ1}), i.e., all  weight subspaces are all finite-dimensional. And the classification of irreducible Harish-Chandra modules over $\L$ was already achieved  (see \cite{M,S,MP}).
Afterwards, several families of irreducible weight modules with infinite-dimensional weight spaces were also investigated (see, e.g., \cite{CM,LZ0,LLZ}).
Such a module was first constructed by taking the tensor product of some highest weight module and intermediate
series module (see \cite{Z}), whose irreducibility   was  solved completely  in \cite{CGZ}.

And  non-weight $\L$-modules have also attracted much attention in the past few years, such as Whittaker modules (see, e.g., \cite{OW,LGZ,MW,MZ}),
 $\C[L_0]$-free modules,
irreducible modules from Weyl modules and a class of non-weight modules  including highest-weight-like modules
 (see, e.g, \cite{LZ,TZ,TZ1,LZ2,GLZ,CG}). In the present paper, we shall study non-weight $\L$-modules. More precisely, we are going to construct a family of new irreducible $\L$-modules from the tensor products of a finite number of  $\L$-modules  $\Omega(\lambda_i,\alpha_i)=\C[\partial_i]$ (see \cite{LZ2}) and $\L$-module $\mathcal{M}(V,\lambda_0)=V\otimes\C[\partial_0]$ (see \cite{LZ}).
It is worthwhile to  point out that  tensor product modules  over the Virasoro algebra as defined in \cite{H,CGZ,TZ,TZ1,LGW,Z} are related to the locally finite modules $\mathrm{Ind}(M)$ (see \cite{MZ}). While tensor product modules  whose  tensor factors are locally non-nilpotent are also important. In this present paper we shall investigate a class of such kind of modules.  To be explicit, we are going to determine the irreducibility of the modules  $\mathcal{M}(V,\lambda_0)\otimes\bigotimes_{i=1}^m\Omega(\lambda_i,\alpha_i)$ and study the  isomorphisms between these modules, and between these modules and other known modules.

Here follows a brief summary of this  paper.
 In Section $2$, we recall some known results for late use. In
  Section $3$,  the irreducibility of modules $\mathcal{M}(V,\lambda_0)\otimes\bigotimes_{i=1}^m\Omega(\lambda_i,\alpha_i)$ is determined.
 Section 4 is devoted to  giving the necessary and sufficient conditions for two  irreducible $\L$-modules $\mathcal{M}(V,\lambda_0)\otimes\bigotimes_{i=1}^m\Omega(\lambda_i,\alpha_i)$
 and $\mathcal{M}(W,\mu_0)\otimes\bigotimes_{j=1}^n\Omega(\mu_j,\beta_j)$ being isomorphic.    In Section 5,   we compare   irreducible $\L$-modules  constructed in the present paper    with the other known irreducible non-weight $\L$-modules and  show that all these irreducible $\L$-modules $\mathcal{M}(V,\lambda_0)\otimes\bigotimes_{i=1}^m\Omega(\lambda_i,\alpha_i)$ ($V$ being infinite-dimensional and $m\neq1$)   are new.

Throughout this paper, we  denote by $\C,\C^*,\Z$, $\Z_+$, and $\N$  the sets of complex numbers,  nonzero complex numbers, integers ,  nonnegative integers and positive integers, respectively.
 All vector spaces are assumed to be over $\C$.

\section{Preliminaries}

For $\lambda\in\C^*$ and $\alpha\in\C$, the non-weight $\L$-module $\Omega(\lambda,\alpha)=\C[\partial]$  is defined by
$$L_mf(\partial)=\lambda^m(\partial-m\alpha)f(\partial-m)\quad {\rm and}\quad Cf(\partial)=0\quad {\rm  for}\ m\in\Z, f(\partial)\in\C[\partial].$$
 It was proved in \cite{LZ2} that  $\Omega(\lambda,\alpha)$ is irreducible
 if and only if $\alpha \neq0$.

For $r\in\Z_+$,
  denote by $\L_{r}$
the ideal of $\L_{+}=\mathrm{span}_{\C}\{L_i\mid i\geq0\}$  generated by  $L_i$ for all $i>r$.
Let    $\bar \L_{r}$
be the quotient algebra $\L_{+}/ \L_{r}$
 and   $\bar L_i$  the image of $L_i$
in $\bar \L_{r}$. Assume that $V$ is an $\bar \L_{r}$-module.
For any $\lambda\in\C^*$ and  $\alpha\in\C$, define an $\L$-action  on the vector space $\mathcal{M}\big(V,\lambda):=V\otimes \C [\partial]$  as follows (see \cite{LZ}):
 \begin{eqnarray*}
 &&  L_m\big(v\otimes f(\partial)\big)=v\otimes\lambda^m\partial f(\partial-m)+\sum_{i=0}^r(\frac{m^{i+1}}{(i+1)!}\bar L_i)v\otimes \lambda^mf(\partial-m),\\
&& C\big(v\otimes f(\partial)\big)=0\quad {\rm for}\  m\in\Z,v\in V, f(\partial)\in\C[\partial].
 \end{eqnarray*}

\begin{remark}\rm \label{rem2.1} Let $r\in\Z_+$ and let $V$ be  an irreducible $\bar\L_{r}$-module.
\begin{itemize}\lineskip0pt\parskip-1pt
\item[\rm(1)] $V$ must be infinite-dimensional if ${\rm dim}\, V> 1$, since  any   finite-dimensional irreducible module over the solvable Lie algebra $\bar\L_{r}$ is one-dimensional by Lie's Theorem.

\item[\rm(2)] If $r=0,$  then $V$ must be one-dimensional, say,      $V=\C v.$   That is,  $\bar L_iv=0$
for any $i\in\N,$ $\bar L_0v =-\beta v$ for some $\beta\in\C$.  In this case $V$ is denoted by $V_\beta.$  Note that  $\mathcal{M}(V_\beta,\lambda)\cong\Omega(\lambda,\beta)$.
\end{itemize}
\end{remark}

The objects studied in the present paper are the tensor product $\L$-modules of the forms $\mathcal{M}(V,\lambda_0)\otimes\bigotimes_{i=1}^m\Omega(\lambda_i,\alpha_i)$, where  $V$ is an $\bar \L_{r}$-module, $m\in\mathbb N$ and $\lambda_i, \alpha_i\in \C$.
\begin{remark}\rm \label{re2.1} Let $r\in\Z_+$ and $V$ be  an irreducible $\bar\L_{r}$-module.
\begin{itemize}\lineskip0pt\parskip-1pt
\item[\rm(1)]  By Remark \ref{rem2.1}\,(2),  it is clear  that $$\mathcal{M}(V_{\alpha_0},\lambda_0)\otimes\bigotimes_{i=1}^m\Omega(\lambda_i,\alpha_i)\cong
\bigotimes_{i=0}^m\Omega(\lambda_i,\alpha_i)$$
 for any  $\lambda_0,\ldots,\lambda_m\in\C^*$ and $\alpha_0,\ldots,\alpha_m\in\C$. The modules $
\bigotimes_{i=0}^m\Omega(\lambda_i,\alpha_i)$ were studied in \cite{TZ1}.

\item[\rm (2)]  We may assume that $\bar L_r$ is injective on $V$, since otherwise $\bar L_r V=0$ by  {\rm \cite[Lemma 2]{LLZ}}  and    $V$ would  reduce to be an $\bar \L_{r-1}$-module.
\end{itemize}

\end{remark}

 The following result will be used frequently in the sequel.
\begin{prop}{\rm \cite{LZ0}}\label{pro2.3}
Let $P$ be a vector space over $\C$ and $P_1$  a subspace of $P$. Suppose
that $\mu_1,\mu_2,\ldots,\mu_s\in\C^*$ are pairwise distinct, $v_{i,j}\in P$
and $f_{i,j}(t)\in\C[t]$ with $\mathrm{deg}\,f_{i,j}(t)=j$ for $i=1,2,\ldots,s$, $j=0,1,2,\ldots,k.$
If $$\sum_{i=1}^{s}\sum_{j=0}^{k}\mu_i^mf_{i,j}(m)v_{i,j}\in P_1\quad{ \it for}\quad \forall m\in\Z,$$
then $v_{i,j}\in P_1$ for all $i,j\in\Z$.
\end{prop}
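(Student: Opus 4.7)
The plan is to pass to the quotient $\bar P := P/P_1$: writing $\bar v_{i,j}$ for the image of $v_{i,j}$, the hypothesis becomes $\sum_{i,j}\mu_i^m f_{i,j}(m)\bar v_{i,j}=0$ in $\bar P$ for all $m\in\Z$ (resp.\ for all $m>K$), and the goal is to show $\bar v_{i,j}=0$ for every $i,j$. Choosing a basis of $\bar P$ and projecting coordinate-wise reduces everything to the following purely scalar claim: if $\mu_1,\ldots,\mu_s\in\C^*$ are pairwise distinct and $p_1,\ldots,p_s\in\C[t]$ satisfy $\deg p_i\leq k$ and
\begin{equation*}
\sum_{i=1}^s \mu_i^m p_i(m)=0 \quad\text{for all }m\in\Z\ \text{(or all }m>K\text{),}
\end{equation*}
then each $p_i$ is the zero polynomial.

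For the scalar claim I would induct on $s$, with base case $s=1$ reducing to the elementary fact that a polynomial vanishing on an infinite set is identically zero. For the inductive step, let $E$ denote the shift $m\mapsto m+1$ and compute
\begin{equation*}
(E-\mu_s)\bigl[\mu_i^m p_i(m)\bigr]=\mu_i^m\bigl(\mu_i p_i(m+1)-\mu_s p_i(m)\bigr).
\end{equation*}
For $i=s$ this equals $\mu_s^{m+1}\Delta p_s(m)$ with $\Delta$ the forward difference, so iterating $k+1$ times annihilates the $s$-th summand because $\Delta^{k+1}$ kills $\C[t]_{\leq k}$. For $i<s$, the operator $R_i[p](m):=\mu_i p(m+1)-\mu_s p(m)$ is upper triangular in the monomial basis with nonzero diagonal entry $\mu_i-\mu_s$, hence invertible on $\C[t]_{\leq k}$. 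Applying $(E-\mu_s)^{k+1}$ to the hypothesis therefore yields $\sum_{i<s}\mu_i^m R_i^{k+1}[p_i](m)=0$ on the same index set; the inductive hypothesis forces $R_i^{k+1}[p_i]=0$, and invertibility of $R_i$ gives $p_i=0$ for every $i<s$. Then $\mu_s^m p_s(m)\equiv 0$ forces $p_s=0$ by the base case. Since $E$ preserves any half-line $\{m>K\}$, the same argument handles the variant hypothesis.

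To finish, regroup the original hypothesis as $\sum_i\mu_i^m g_i(m)\in P_1$ with $g_i(t):=\sum_{j=0}^k f_{i,j}(t)v_{i,j}\in P[t]$. The scalar reduction applied componentwise in $\bar P$ implies that each $g_i$ vanishes in $\bar P[t]$. Because $\deg f_{i,j}=j$ for $j=0,1,\ldots,k$, the polynomials $f_{i,0},\ldots,f_{i,k}$ have pairwise distinct degrees and therefore form a basis of $\C[t]_{\leq k}$; thus the vanishing of $g_i$ in $\bar P[t]$ forces each $\bar v_{i,j}=0$, i.e.\ $v_{i,j}\in P_1$, as desired. The only real obstacle is the inductive step, which hinges on the invertibility of $R_i$ on $\C[t]_{\leq k}$; this, in turn, is precisely where the hypothesis $\mu_i\neq\mu_s$ is used.
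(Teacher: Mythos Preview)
Your argument is correct. The paper does not actually prove this proposition; it is quoted verbatim from \cite{LZ0} and used as a black box throughout, so there is no ``paper's own proof'' to compare against.

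A few minor remarks on the write-up. First, the reduction via a basis of $\bar P$ is slightly heavier than needed: it suffices to apply an arbitrary linear functional $\varphi\colon \bar P\to\C$ and conclude that $\varphi$ annihilates each $\bar v_{i,j}$, hence $\bar v_{i,j}=0$. Second, in the shift computation you might note explicitly that after applying $(E-\mu_s)^{k+1}$ the surviving polynomials $R_i^{k+1}[p_i]$ still lie in $\C[t]_{\le k}$ (since $R_i$ preserves degree), so the inductive hypothesis applies with the \emph{same} bound $k$. Third, the half-line variant works because if $F(m)=0$ for all $m>K$ then $(E-\mu_s)F(m)=F(m+1)-\mu_s F(m)=0$ for all $m>K$ as well; you implicitly use that the index set is closed under $m\mapsto m+1$, which is exactly right.

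The classical alternative proof (and the one in \cite{LZ0}) is a Vandermonde-type linear algebra argument: restricting $m$ to $s(k+1)$ consecutive integers produces a square linear system in the $v_{i,j}$ whose coefficient matrix is a confluent/generalised Vandermonde in the $\mu_i$, which is invertible when the $\mu_i$ are distinct. Your finite-difference/annihilator approach is arguably cleaner because it avoids computing any determinant and makes the role of the hypothesis $\mu_i\neq\mu_s$ completely transparent via the invertibility of $R_i$.
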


\begin{comment}{\color{blue}Now we introduce some notations. Let $m\in\N,\lambda_i\in\C^*$ and $\alpha_i\in\C$ for $i=1,\ldots,m$.
%Denote  $\Omega(\lambda_i,\alpha_i)=\C[\partial_i]$.
%The nontrivial action of $\L$ on $\Omega(\lambda_i,\alpha_i)$
%as follows
 %$$L_k \partial_i^n=\lambda_i^k\big(\partial_i-k \alpha_i)(\partial_i-k)^{n}$$
 % for $k\in\Z,n\in\Z_+,i=1,2,\ldots,m$. Then $\Omega(\lambda_i,\alpha_i)$ is irreducible if and only if $\alpha_i\neq0$ for $i=1,\ldots,m.$
It will be convenient to identify
the module $\bigotimes_{i=0}^m\Omega(\lambda_i,\alpha_i)$
with the polynomial algebra $\C[\partial_0,\ldots,\partial_m]$. And then we can write
$\mathcal{M}(V,\lambda_0)\otimes\bigotimes_{i=1}^m\Omega(\lambda_i,\alpha_i)$ as $V\otimes\C[\partial_0,\partial_1,\partial_2,\ldots,\partial_m]$.}\end{comment}

\section{Irreducibility}
We  investigate  the irreducibility of the $\L$-modules $\mathcal{M}(V,\lambda_0)\otimes\bigotimes_{i=1}^m\Omega(\lambda_i,\alpha_i)$ in this section.
\begin{comment}
{\color{blue}Consider the tensor product $\mathcal{M}(V,\lambda_0)\otimes\bigotimes_{i=1}^m\Omega(\lambda_i,\alpha_i)$. Define an  order $``\prec"$ on the  subset
$$\{v\otimes\partial_0^{p_0}\partial_1^{p_1}\cdots\partial_m^{p_m}\mid\mathbf{P}=(p_0,p_1,\ldots,p_m)\in\Z_+^{m+1},m\in\N,0\neq v\in V\},$$
by
\begin{eqnarray*}&&u\otimes\partial_0^{p_0}\partial_1^{p_1}\cdots\partial_m^{p_m}\prec
v\otimes\partial_0^{q_0}\partial_1^{q_1}\cdots\partial_m^{q_m}\\ &\Longleftrightarrow&
\exists k\in\Z_+\ \mathrm{such\ that} \ p_k<q_k\ \mathrm{and} \ p_n=q_n \ \mathrm{for} \ n<k.\end{eqnarray*}
Then   each  $0\neq w\in\mathcal{M}(V,\lambda_0)\otimes\bigotimes_{i=1}^m\Omega(\lambda_i,\alpha_i)$  can be (uniquely) written in the form
$$w=\sum_{\mathbf{p}\in I}  v_{\mathbf{p}}\otimes\partial_0^{p_0}\partial_1^{p_1}\cdots\partial_m^{p_m},$$
where $I$ is a finite subset of $\Z_+^{m+1}$  and all $v_{\mathbf{p}}$ are nonzero vectors of $V$. Now we define
 the degree of $w$ to be $\mathbf{p}=(p_0,p_1,\ldots,p_m)$, where $v_{\mathbf{p}}\otimes\partial_0^{p_0}\partial_1^{p_1}\cdots\partial_m^{p_m}$ is the term with the maximal order in the sum, and denote it by $\mathrm{deg}(w)$.  Notice that $\mathrm{deg}(v\otimes1)=\mathbf{0}=(\underbrace{0,0,\ldots,0}_{m+1})$.}\end{comment}

\begin{lemm}{\rm \cite{TZ1}}\label{lemm3.1}
Let $m\in\N$  and $\lambda_i,\alpha_i\in\C^*$ for $i=0,1,\ldots,m$. Then  $\bigotimes_{i=0}^m\Omega(\lambda_i,\alpha_i)$ is irreducible if and only if
$\lambda_0,\ldots,\lambda_m$ are pairwise distinct.
\end{lemm}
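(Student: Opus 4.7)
The plan is to handle the two implications separately, using Proposition \ref{pro2.3} for the sufficiency direction and an explicit change of variables to produce a proper submodule for the necessity direction.

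\textbf{Sufficiency (distinct $\lambda_i$'s imply irreducibility).} Let $W$ be any nonzero $\L$-submodule of $\bigotimes_{i=0}^{m}\Omega(\lambda_i,\alpha_i)=\C[\partial_0,\ldots,\partial_m]$, and pick $0\neq w\in W$ of minimal degree with respect to $\prec$. Since the $\L$-action on a pure tensor factor $\Omega(\lambda_i,\alpha_i)$ sends $\partial_i^p$ to $\lambda_i^k(\partial_i-k\alpha_i)(\partial_i-k)^p$, we have on the tensor product
$$L_kw=\sum_{i=0}^{m}\lambda_i^k\bigl[(\partial_i-k\alpha_i)\,w|_{\partial_i\mapsto\partial_i-k}\bigr]=\sum_{i=0}^{m}\lambda_i^kf_i(k,\partial_0,\ldots,\partial_m),$$
with each $f_i$ polynomial in $k$. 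Because the $\lambda_i$ are pairwise distinct elements of $\C^*$, Proposition \ref{pro2.3} applied vector-by-vector in the quotient $\C[\partial_0,\ldots,\partial_m]/W$ forces every coefficient of $\lambda_i^kk^j$ in this double expansion to already lie in $W$. Comparing the leading $k$-coefficient of each $f_i$ with the $\prec$-top monomial of $w$, I would exhibit a nonzero element of $W$ whose degree is strictly smaller than $\deg(w)$, contradicting minimality unless $\deg(w)=\mathbf{0}$. Hence $1\in W$; a second round of the same argument, applied to $L_k\cdot 1,L_{k_1}L_{k_2}\cdot 1,\ldots$, recursively produces every monomial $\partial_0^{p_0}\cdots\partial_m^{p_m}$, so $W=\C[\partial_0,\ldots,\partial_m]$.

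\textbf{Necessity (some $\lambda_i=\lambda_j$ implies reducibility).} After relabelling we may assume $\lambda_0=\lambda_1=:\lambda$, and we introduce the new variable $u:=\partial_0+\partial_1$. Any polynomial $w\in\C[u,\partial_2,\ldots,\partial_m]$ satisfies the key identity $w|_{\partial_0\mapsto\partial_0-k}=w|_{\partial_1\mapsto\partial_1-k}=w|_{u\mapsto u-k}$, so a direct expansion of the $\L$-action on such a $w$ collapses to
$$L_kw=\lambda^k\bigl(u-k(\alpha_0+\alpha_1)\bigr)w|_{u\mapsto u-k}+\sum_{i=2}^{m}\lambda_i^k(\partial_i-k\alpha_i)\,w|_{\partial_i\mapsto\partial_i-k}.$$
Thus $\C[u,\partial_2,\ldots,\partial_m]$ is an $\L$-submodule, manifestly isomorphic to $\Omega(\lambda,\alpha_0+\alpha_1)\otimes\bigotimes_{i=2}^{m}\Omega(\lambda_i,\alpha_i)$, and it is proper because $\partial_0-\partial_1$ does not lie in it.

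\textbf{Main obstacle.} The delicate point is the descent step in the sufficiency direction: within the double sum $\sum_i\lambda_i^k\sum_jk^jv_{i,j}$ whose coefficients Proposition \ref{pro2.3} extracts into $W$, one must identify precisely which coefficient $v_{i,j}$ has $\prec$-degree strictly smaller than $\deg(w)$, and rule out accidental cancellations between the different $\lambda_i^k$ pieces. This requires careful bookkeeping of how each top monomial of $w$ contributes to the highest power of $k$ in each $f_i$, and is where the pairwise distinctness of the $\lambda_i$ is genuinely used.
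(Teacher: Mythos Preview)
The paper does not supply its own proof of Lemma~\ref{lemm3.1}; the result is simply quoted from \cite{TZ1}. That said, the techniques you use are exactly those the paper deploys for the more general statements: your necessity argument is the $s=0$, $V=V_0$ instance of Lemma~\ref{lemm2}, and your sufficiency argument is the $V$ one-dimensional instance of the degree-descent in the proof of Theorem~\ref{th1} combined with the generation step of Lemma~\ref{lemm1}. So your approach matches the paper's ambient method.

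Regarding the ``main obstacle'' you flag: there is no genuine difficulty, and you should replace that paragraph by the explicit computation rather than leave it as a caveat. Write the $\prec$-top monomial of $w$ as $c_{\mathbf q}\,\partial_0^{q_0}\cdots\partial_m^{q_m}$ and let $i'$ be minimal with $q_{i'}>0$. By the definition of $\prec$, every other monomial $\mathbf p$ occurring in $w$ satisfies $p_0=\cdots=p_{i'-1}=0$ and $p_{i'}\le q_{i'}$. Hence in the expansion $L_kw=\sum_i\lambda_i^k(\partial_i-k\alpha_i)\,w|_{\partial_i\mapsto\partial_i-k}$, the coefficient of $\lambda_{i'}^k k^{\,q_{i'}+1}$ receives contributions only from monomials with $p_{i'}=q_{i'}$, and each such $\mathbf p$ contributes $(-1)^{q_{i'}+1}\alpha_{i'}c_{\mathbf p}\prod_{j>i'}\partial_j^{p_j}$; distinct $\mathbf p$'s give distinct monomials, so nothing cancels. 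The $\prec$-leading term of this coefficient is therefore $(-1)^{q_{i'}+1}\alpha_{i'}c_{\mathbf q}\prod_{j>i'}\partial_j^{q_j}\neq 0$ (here $\alpha_{i'}\in\C^*$ is used), and its degree is strictly below $\mathbf q$. Proposition~\ref{pro2.3}, with the pairwise distinctness of the $\lambda_i$, then places this element in $W$, giving the contradiction. This is precisely the computation displayed at the end of the proof of Theorem~\ref{th1} (there written with the extra $\bar L_r$-term coming from $V$).
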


For convenience, we shall  identify
 $\mathcal{M}(V,\lambda_0)\otimes\bigotimes_{i=1}^m\Omega(\lambda_i,\alpha_i)$
with  $V\otimes \C[\partial_0,\ldots,\partial_m]$. And denote $ \underbrace{1\otimes1\otimes\cdots\otimes1}_{m}$ by ${\bf1}_m$ for $m\in\N$.

The following two lemmas are also needed.
\begin{lemm}\label{lemm1}Suppose that $V$ is  an  infinite-dimensional  irreducible  $\bar \L_{r}$-module.
Let $m\in\N, \lambda_0,\lambda_i,\alpha_i\in\C^*$ for $i=1,2,\ldots,m$ with
$\lambda_0,\ldots,\lambda_m$ pairwise distinct and  $0\neq v\in V$.  Then $v\otimes{\bf1}_{m+1}$ generates the module $\mathcal{M}(V,\lambda_0)\otimes\bigotimes_{i=1}^m\Omega(\lambda_i,\alpha_i)$.
\end{lemm}
\begin{proof}
Let $W$ be the submodule of $\mathcal{M}(V,\lambda_0)\otimes\bigotimes_{i=1}^m\Omega(\lambda_i,\alpha_i)$ generated by $v\otimes{\bf1}_{m+1}$.
Note for $k\in\Z$ that
\begin{eqnarray*}
\nonumber L_k(v\otimes{\bf1}_{m+1})
&=& \big(v\otimes\lambda_0^k\partial_0+\sum_{i=0}^{r}(\frac{k^{i+1}}{(i+1)!}\bar L_i)v\otimes \lambda_0^k\big)\otimes\mathbf{1}_{m}+\\
&&\ \ \ v\otimes1\otimes\big(\sum_{i=1}^m1\otimes\cdots\otimes\lambda_i^k(\partial_i-k\alpha_i)\otimes\cdots\otimes1\big)\in W.
\end{eqnarray*}
By Proposition \ref{pro2.3},  $v\otimes\partial_i\in W$ for $i=0,\ldots, m$. While from $L_k(v\otimes\partial_i)\in W$ one can similarly derive $v\otimes \partial_i\partial_j\in W$ for $j=0,\ldots, m$. So inductively one has   $v\otimes\partial_0^{p_0}\cdots\partial_m^{p_m}\in W$ for  $p_i\in\Z_+$ and $i=0,\ldots,m$.
Then it follows from
\begin{eqnarray*}
&& L_k\big(v\otimes (\partial_0+k)^{p_0}\partial_1^{p_1}\cdots\partial_m^{p_m}\big)\in W\\
&=&\big(v\otimes\lambda_0^k\partial_0^{p_0+1}+\sum_{i=0}^{r}(\frac{k^{i+1}}{(i+1)!}\bar L_i)v\otimes \lambda_0^k\partial_0^{p_0}\big)\partial_1^{p_1}\cdots\partial_m^{p_m}+\\
&&\ \ \ v\otimes(\partial_0+k)^{p_0}\otimes\big(\sum_{i=1}^m\partial_1^{p_1}\cdots\lambda_i^k(\partial_i-k\alpha_i)
(\partial_i-k)^{p_i}\cdots\partial_m^{p_m}\big)
\end{eqnarray*} and  Proposition \ref{pro2.3} that $\bar L_iv\otimes \partial_0^{p_0}\partial_1^{p_1}\cdots\partial_m^{p_m}\in W$ for $i=0,1,\ldots,r$. Now by  the irreducibility of $V$,  $\mathcal{M}(V,\lambda_0)\otimes\bigotimes_{i=1}^m\Omega(\lambda_i,\alpha_i)\subseteq W$. Thus, $\mathcal{M}(V,\lambda_0)\otimes\bigotimes_{i=1}^m\Omega(\lambda_i,\alpha_i)$  is generated by $v\otimes {\bf1}_{m+1}$.
\end{proof}

\begin{comment}We denote the highest power of $t$ in $f(t)\in \C[t]$ by ${\rm \widetilde{deg}}(f(t))$.
For $s\in\Z_+$ and $i=0,1$, denote
\begin{eqnarray*}
W_{s, i}=\mathrm{span}\{f(\partial_i)(\partial_0+\partial_1)^n\mid n\in\Z_+,0\leq\mathrm{\widetilde{deg}}(f(\partial_i))\leq s\}.
\end{eqnarray*}\end{comment}
\begin{lemm}\label{lemm2}
Let $r,s\in\Z_+,\lambda,\alpha_1\in\C^*$ and let  $V$ be an infinite-dimensional irreducible $\bar \L_{r}$-module.
Denote $$W_{s, i}=\mathrm{span}\{f(\partial_i)(\partial_0+\partial_1)^n\mid n\in\Z_+,0\leq\mathrm{{deg}}(f(\partial_i))\leq s\}.$$  Then $V\otimes W_{s,i}$  are  submodules    of $\mathcal{M}(V,\lambda)\otimes\Omega(\lambda,\alpha_1)$ for $i=0,1$.
\end{lemm}
\begin{proof}
 For any $0\neq v\in V,n\in\Z_+,k\in\Z$ and $f(\partial_0)\in W_{s,0}$, by noting that $(\partial_0+\partial_1-k)^n\in W_{s,0}$ we have
 \begin{eqnarray*}
 &&L_k\big(v\otimes f(\partial_0)(\partial_0+\partial_1)^n\big)
 =L_k\big(v\otimes\sum_{i=0}^n\binom{n}{i}f(\partial_0)\partial_0^{i}\partial_1^{n-i}\big)\\
&=&v\otimes\sum_{i=0}^n\binom{n}{i}\lambda\partial_0f(\partial_0-k)(\partial_0-k)^{i}\partial_1^{n-i}
\\&&
+\sum_{j=0}^{r}\big(\frac{k^{j+1}}{(j+1)!}\bar L_j\big)v\otimes\sum_{i=0}^n\binom{n}{i}\lambda f(\partial_0-k)(\partial_0-k)^{i}\partial_1^{n-i}
\\&&
+\ v\otimes\sum_{i=0}^n\binom{n}{i}f(\partial_0)\partial_0^{i}\lambda(\partial_1-k\alpha_1)(\partial_1-k)^{n-i}
\\&=&v\otimes\Big(\lambda\partial_0f(\partial_0-k)(\partial_0+\partial_1-k)^n
+f(\partial_0)\lambda(\partial_1-k\alpha_1)(\partial_0+\partial_1-k)^n\Big)
\\&&+\sum_{j=0}^{r}\big(\frac{k^{j+1}}{(j+1)!}\bar L_j\big)v\otimes \lambda f(\partial_0-k) (\partial_0+\partial_1-k)^n
\\&=&v\otimes\lambda\partial_0\big(f(\partial_0-k)-f(\partial_0)\big)(\partial_0+\partial_1-k)^n
\\&&+\ v\otimes\lambda\Big(f(\partial_0)(\partial_0+\partial_1-k)^{n+1}+k(1-\alpha_1)f(\partial_0)
(\partial_0+\partial_1-k)^n\Big)
\\&&+\sum_{j=0}^{r}\big(\frac{k^{j+1}}{(j+1)!}\bar L_j\big)v\otimes \lambda f(\partial_0-k) (\partial_0+\partial_1-k)^n\in V\otimes W_{s,0}.
\end{eqnarray*}  This shows that  $V\otimes W_{s,0}$ is an $\L$-submodule of  $\mathcal{M}(V,\lambda)\otimes\Omega(\lambda,\alpha_1)$. And one can similarly show that
each $V\otimes W_{s,1}$ is also an $\L$-submodule.
\end{proof}

\begin{coro}\label{coro3}
Let $\lambda,\alpha_1\in\C^*$ and let  $V$ be an infinite-dimensional irreducible $\bar \L_{r}$-module.
 Then  $\mathcal{M}(V,\lambda)\otimes\Omega(\lambda,\alpha_1)$
has two series of $\L$-submodules
\begin{eqnarray*}
&&V\otimes W_{0,i}\subset V\otimes W_{1,i}\subset\cdots V\otimes W_{n,i}\subset\cdots
 \end{eqnarray*}such that $V\otimes W_{n,i}/V\otimes W_{n-1,i}\cong \mathcal{M}(V^{(n)},\lambda)$ for $i=0,1$, where $V^{(n)}=V$ is an  $\bar \L_{r}$-module for which the  action
of  $\bar L_0$ is replaced by $\bar L_0-(\alpha_1+n)$ and the actions of other $\bar L_i$ are invariant.
\end{coro}
\begin{proof}
By the similar calculations as in the proof of Lemma \ref{lemm2}, we deduce that
\begin{eqnarray*}
 &&L_k\big(v\otimes \partial_0^n(\partial_0+\partial_1)^m\big)
\\&\equiv&v\otimes\lambda^k\partial_0^n(\partial_0+\partial_1-k(\alpha_1+n))(\partial_0+\partial_1-k)^m
\\&&+\sum_{i=0}^{r}\big(\frac{k^{i+1}}{(i+1)!}\bar L_i\big)v\otimes\lambda^k \partial_0^n (\partial_0+\partial_1-k)^m \quad (\mathrm{mod}\ V\otimes W_{n-1,0})
\end{eqnarray*}
and
\begin{eqnarray*}
&&L_k\big(v\otimes (\partial_0+\partial_1)^m\partial_1^n\big)
\\&\equiv&
v\otimes\lambda^k(\partial_0+\partial_1-k(\alpha_1+n))(\partial_0+\partial_1-k)^m\partial_1^n
\\&&+\sum_{i=0}^{r}\big(\frac{k^{i+1}}{(i+1)!}\bar L_i\big)v\otimes\lambda^k (\partial_0+\partial_1-k)^m \partial_1^n \quad (\mathrm{mod}\ V\otimes W_{n-1,1}).
\end{eqnarray*}
It follows that
$V\otimes  {W}_{n,i}/V\otimes {W}_{n-1,i}\cong \mathcal{M}(V^{(n)},\lambda)$ for $i=0,1$, as desired.
\end{proof}

Now we  are ready to present  the main theorem of this section, which
gives a characterization of   the irreducibility of   $\mathcal{M}(V,\lambda_0)\otimes\bigotimes_{i=1}^m\Omega(\lambda_i,\alpha_i)$.
\begin{theo}\label{th1}
Let $m\in\N,r\in\Z_+$ and $\lambda_0,\lambda_i,\alpha_i\in\C^*$ for $i=1,2,\ldots,m$ with
$\lambda_0,\cdots, \lambda_m$ pairwise distinct. Suppose that   $V$ is  an irreducible $\bar \L_{r}$-module such that the action of $\bar L_r$ on $V$
is injective.  Then the $\L$-module $\mathcal{M}(V,\lambda_0)\otimes\bigotimes_{i=1}^m\Omega(\lambda_i,\alpha_i)$ is  irreducible.
\end{theo}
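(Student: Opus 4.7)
The plan is to prove both implications. For sufficiency, if $V\cong V_{\alpha_0}$, then by Remark~\ref{re2.1}(1) the module is isomorphic to $\Omega(\lambda_0,0)\otimes\bigotimes_{i=1}^m\Omega(\lambda_i,\alpha_i)$, and $\partial_0\C[\partial_0,\partial_1,\ldots,\partial_m]$ is a proper $\L$-submodule (the vanishing parameter in the zeroth factor causes the $L_k$-action to preserve the factor $\partial_0$). For necessity, assume $V\not\cong V_{\alpha_0}$. If $\dim V=1$, then $V\cong V_\beta$ with $\beta\neq\alpha_0$, and Remark~\ref{re2.1}(1) combined with Lemma~\ref{lemm3.1} give irreducibility (all parameters of the resulting $\Omega$-factors are nonzero and all $\lambda_i$ pairwise distinct). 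So the essential case is $\dim V=\infty$; by Remark~\ref{re2.1}(2) I may assume $r\geq 1$ and that $\bar L_r$ acts injectively on $V$. Fix a nonzero submodule $W$ of $M:=\mathcal{M}\big(V,\Omega(\lambda_0,\alpha_0)\big)\otimes\bigotimes_{i=1}^m\Omega(\lambda_i,\alpha_i)$; the plan is to produce a nonzero element $v\otimes 1\in W$, after which Lemma~\ref{lemm1} gives $W=M$.

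The production proceeds in two reduction steps. First, pick $w\in W\setminus\{0\}$ minimizing $\sum_{i=1}^m Q_i(w)$, where $Q_i(w)$ denotes the maximum exponent of $\partial_i$ in the support of $w$. If this sum is positive, fix $j\geq 1$ with $Q_j(w)>0$. Expanding $L_k w$ and grouping by $\lambda_i^k$, Proposition~\ref{pro2.3} places each $(\lambda_i^k,k^a)$-coefficient in $W$. The $(\lambda_j^k,k^{Q_j(w)+1})$-coefficient equals
\[
(-1)^{Q_j(w)+1}\alpha_j\sum_{\mathbf{q}\,:\,q_j=Q_j(w)}v_{\mathbf{q}}\otimes\partial_0^{q_0}\cdots\partial_{j-1}^{q_{j-1}}\partial_{j+1}^{q_{j+1}}\cdots\partial_m^{q_m},
\]
which is nonzero by $\alpha_j\neq 0$ together with the linear independence of distinct $\partial$-monomials, and whose invariant $\sum_{i=1}^m Q_i$ is strictly smaller than that of $w$, contradicting minimality. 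Hence I may assume $w=\sum_{q_0\geq 0}v_{q_0}\otimes\partial_0^{q_0}\otimes 1\otimes\cdots\otimes 1$.

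For the second step, apply $L_k$ to this reduced $w$ and examine the $\lambda_0^k$-component. The $\bar L_r$-summand coming from the $\mathcal{M}\big(V,\Omega(\lambda_0,\alpha_0)\big)$-action contributes $\tfrac{k^{r+1}}{(r+1)!}\bar L_r v_{q_0}\otimes(\partial_0-k)^{q_0}$, of $k$-degree $r+1+q_0$, which (since $r\geq 1$) strictly exceeds the $k$-degree $q_0+1$ of the $(\partial_0-k\alpha_0)(\partial_0-k)^{q_0}$ contribution. Consequently the top $k$-power in the $\lambda_0^k$-component is $k^{r+1+Q_0(w)}$, achieved uniquely at $q_0=Q_0(w)$, with coefficient $\tfrac{(-1)^{Q_0(w)}}{(r+1)!}\bar L_r v_{Q_0(w)}\otimes 1$. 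Proposition~\ref{pro2.3} places this in $W$, and injectivity of $\bar L_r$ ensures it is nonzero, yielding the required $v\otimes 1\in W$. The main obstacle throughout is bookkeeping---confirming that each extracted leading $k$-coefficient is genuinely nonzero---and the two steps handle this via (a) linear independence of distinct monomials in the $\partial_i$'s and (b) the injectivity of $\bar L_r$ on $V$ guaranteed by Remark~\ref{re2.1}(2).
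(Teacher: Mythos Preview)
Your proof is correct and follows essentially the same approach as the paper: reduce the finite-dimensional case to Lemma~\ref{lemm3.1}, and in the infinite-dimensional case use Proposition~\ref{pro2.3} together with the injectivity of $\bar L_r$ to extract a nonzero $v\otimes 1$ from any nonzero submodule, then finish with Lemma~\ref{lemm1}. The only difference is organizational---the paper uses a single minimality argument with respect to the lexicographic order $\prec$ (splitting on whether the first nonzero exponent of the leading term sits at index $0$ or at some $i'\geq 1$), whereas you separate the reduction into two steps, first killing the variables $\partial_1,\ldots,\partial_m$ by minimizing $\sum_{i\geq 1}Q_i$ and then killing $\partial_0$ via the top-degree $\bar L_r$ contribution; the underlying extractions are the same.
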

\begin{proof} Denote  $M=\mathcal{M}(V,\lambda_0)\otimes\bigotimes_{i=1}^m\Omega(\lambda_i,\alpha_i)$.
Consider first   the case in which  $V$ is finite-dimensional.
Then  $V\cong V_{\alpha_0}$ for some  $\alpha_0\in\C^*$ and $M\cong \bigotimes_{i=0}^m\Omega(\lambda_i,\alpha_i)$ by Remarks \ref{rem2.1}\,(2) and \ref{re2.1}\,(1). Thus by   Lemma \ref{lemm3.1}, $M$ is irreducible.

Now assume  that $V$ is infinite-dimensional. For any $u=\sum_{\mathbf{p}\in J} v_{\mathbf{p}}\otimes \partial_0^{p_0}\partial_1^{p_1}\cdots\partial_m^{p_m}\in M$ with  $v_{\bf p}\in V$ nonzero and $J$ a finite subset of $\mathbb Z_+^{m+1},$ define deg$(u)={\rm max}\{{\bf p}\mid {\bf p}\in J\}$ (here we use the  lexicographic order on $\mathbb Z_+^{m+1}$).
  Let  $W$ be a nonzero submodule of $M$ and $w$ a nonzero element  of  $W$ with the minimal degree. We claim $\mathrm{deg}(w)=\mathbf{0}$, i.e., $w=v\otimes {\bf1}_{m+1}$ for some $v\in V$.
Then by Lemma \ref{lemm1},  $W=M$, also proving the irreducibility of  $M$  in this case.

Suppose on the contrary that $\mathrm{deg}(w)>\mathbf{0}$. Write
\begin{equation*}
w=\sum_{\mathbf{p}\in I} v_{\mathbf{p}}\otimes \partial_0^{p_0}\partial_1^{p_1}\cdots\partial_m^{p_m}\in W,
\end{equation*}
where $I$ is a finite subset of $\Z_+^{m+1}$ and each $v_{\mathbf{p}}$ is nonzero in $V$.
Let ${\bf q}$ be  maximal in $I$ and $i^\prime$  minimal such that   $q_{i^\prime}>0$.
Note that for any $k\in\Z_+$,
\begin{eqnarray*}
&& L_k\big(\sum_{\mathbf{p}\in I} v_{\mathbf{p}} \otimes\partial_0^{p_0}\partial_1^{p_1}\cdots\partial_m^{p_m}\big)\\
&=& \sum_{\mathbf{p}\in I}\left(\Big( v_{\mathbf{p}}\otimes\lambda_0^k\partial_0(\partial_0-k)^{p_0}+\sum_{i=0}^{r}\big(\frac{k^{i+1}}{(i+1)!}\bar L_i\big)v_{\mathbf{p}}\otimes \lambda_0^k(\partial_0-k)^{p_0}\Big)\otimes\partial_1^{p_1}\cdots\partial_m^{p_m}\right.\\
   &&\left. \quad\quad \  +~v_{\mathbf{p}}\otimes\partial_0^{p_0}\otimes\sum_{i=1}^m\Big(\partial_1^{p_1}\otimes\cdots\otimes\lambda_i^k(\partial_i-k\alpha_i)
(\partial_i-k)^{p_i}\otimes\cdots\otimes\partial_m^{p_m}\Big)\right)\in W.
\end{eqnarray*}
Applying  Proposition \ref{pro2.3}, we can see that a nonzero element of the  form
$$w^\prime=\left\{\begin{array}{llll}
(-1)^{q_0}\bar L_rv_{{\mathbf{q}}}\otimes \partial_1^{{q}_1}\cdots\partial_m^{{q}_m}+\mathrm{lower}\ \mathrm{terms}&\mbox{if}\
i^\prime=0,\\[4pt]
(-1)^{q_{i^\prime}+1}\alpha_{i^\prime}v_{{\mathbf{q}}}\otimes \partial_{i^\prime}^0\cdots\partial_m^{{q}_m}+\mathrm{lower}\ \mathrm{terms}&\mbox{if}\
1\leq i^\prime\leq m, \alpha_{i^\prime}\neq 0,
\end{array}\right.
$$ lies in $W.$ But ${\rm deg}(w^\prime)<{\rm deg}(w)$, contradicting the minimality of $w$.
\end{proof}

As  an immediate consequence of  Lemma \ref{lemm2} and Theorem \ref{th1} we have:
\begin{coro}
Let $m\in\N,r\in\Z_+,\lambda_0,\lambda_i,\alpha_i\in\C^*$ for $i=1,2,\ldots,m$ and  $V$ be an irreducible $\bar \L_{r}$-module such that  the action of $\bar L_{r}$ on $V$
is injective.  Then the $\L$-module $\mathcal{M}(V,\lambda_0)\otimes\bigotimes_{i=1}^m\Omega(\lambda_i,\alpha_i)$ is  irreducible if and only if
$\lambda_0,\ldots, \lambda_m$ are pairwise distinct.
\end{coro}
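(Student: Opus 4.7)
The plan is to treat the two directions separately, combining Theorem \ref{th1} for sufficiency with explicit constructions of proper submodules drawn from Lemmas \ref{lemm3.1} and \ref{lemm2} for necessity, while using Remark \ref{re2.1}(1) to reduce the finite-dimensional case to a pure tensor product of $\Omega$-modules.

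For sufficiency, assume $V\ncong V_{\alpha_0}$ and $\lambda_0,\ldots,\lambda_m$ are pairwise distinct. Since Theorem \ref{th1} treats both finite-dimensional and infinite-dimensional $V$ uniformly, it directly asserts that $\mathcal W:=\mathcal{M}\big(V,\Omega(\lambda_0,\alpha_0)\big)\otimes\bigotimes_{i=1}^m\Omega(\lambda_i,\alpha_i)$ is irreducible, and no further work is needed.

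For necessity I argue the contrapositive: if $\lambda_i=\lambda_j$ for some $0\leq i<j\leq m$, then $\mathcal W$ is reducible. If $V$ is finite-dimensional, Remark \ref{rem2.1}(2) gives $V\cong V_\beta$ with $\beta\neq\alpha_0$, and Remark \ref{re2.1}(1) identifies $\mathcal W$ with $\bigotimes_{i=0}^m\Omega(\lambda_i,\alpha_i-\delta_{i,0}\beta)$; since $\alpha_0-\beta\in\C^*$ and all other $\alpha_i\in\C^*$, the reducibility direction of Lemma \ref{lemm3.1} produces a proper submodule. Now assume $V$ is infinite-dimensional. If $1\leq i<j$, both $\alpha_i,\alpha_j\in\C^*$, so applying Lemma \ref{lemm3.1} to just the pair yields a proper nonzero $\L$-submodule $N$ of $\Omega(\lambda_i,\alpha_i)\otimes\Omega(\lambda_j,\alpha_j)$; tensoring $N$ with $\mathcal{M}\big(V,\Omega(\lambda_0,\alpha_0)\big)$ and with the remaining $\Omega(\lambda_k,\alpha_k)$'s (and reordering via the natural associativity/symmetry $\L$-module isomorphism) delivers a proper nonzero submodule of $\mathcal W$. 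Finally, if $i=0$, so $\lambda_0=\lambda_j$ for some $j\geq 1$, Lemma \ref{lemm2} directly supplies a proper nonzero $\L$-submodule $V\otimes W_0$ of $\mathcal{M}\big(V,\Omega(\lambda_0,\alpha_0)\big)\otimes\Omega(\lambda_j,\alpha_j)$; tensoring with the other $\Omega(\lambda_k,\alpha_k)$'s again gives a proper nonzero submodule of $\mathcal W$.

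The only non-trivial point is checking that a submodule obtained from a two-factor subtensor remains proper after being tensored with the remaining factors, but this is immediate from the associativity of the tensor-product $\L$-action: a proper subspace in one factor always yields a proper subspace of the whole. Thus the proof is essentially an assembly of Theorem \ref{th1}, Lemma \ref{lemm3.1}, Lemma \ref{lemm2}, and the reduction provided by Remark \ref{re2.1}(1), with no new computation required.
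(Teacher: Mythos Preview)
Your argument is correct and follows the same route the paper intends: Theorem \ref{th1} handles the ``if'' direction, while for the ``only if'' direction you exhibit proper submodules coming from a two-factor piece with equal $\lambda$'s, exactly as the paper signals by citing Lemma \ref{lemm2}. Your write-up is simply more explicit than the paper's one-line remark, in that you separate out the finite-dimensional case via Remark \ref{re2.1}(1) and Lemma \ref{lemm3.1}, and you treat the subcase $1\le i<j$ (where the coincidence occurs among the pure $\Omega$-factors) by invoking Lemma \ref{lemm3.1} rather than Lemma \ref{lemm2}; this last case is not literally covered by Lemma \ref{lemm2} as stated, so your added reference to Lemma \ref{lemm3.1} is appropriate.
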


\section{Isomorphism classes}
In this section, we will determine   the necessary and sufficient conditions for two classes of  irreducible $\L$-modules $\mathcal{M}(V,\lambda_0)\otimes\bigotimes_{i=1}^m\Omega(\lambda_i,\alpha_i)$ and $\mathcal{M}(W,\mu_0)\otimes\bigotimes_{j=1}^n\Omega(\mu_j,\beta_j)$ to be isomorphic.

\begin{lemm}{\rm \cite{TZ1}}\label{lemm4.1}
Let $m,n\in\N,r,s\in\Z_+$,  $\lambda_i,\alpha_i,\mu_j,\beta_j\in\C^*$ for
 $i=0,\ldots,m,j=0,\ldots,n$ with all
$\lambda_i$ and  all $\mu_j$  respectively pairwise distinct.
Then the $\L$-modules  $\bigotimes_{i=0}^m\Omega(\lambda_i,\alpha_i)
     \cong\bigotimes_{j=0}^n\Omega(\mu_j,\beta_j)$
  if and only  if
 $m=n$ and $(\lambda_i,\alpha_i)=(\mu_{\sigma(i)},\beta_{\sigma(i)})$ for  $i=0,1,\ldots,m,\sigma\in S_{m+1}$
     $\mathrm{(}$the $m+1$-th  symmetric group$\mathrm{)}.$
\end{lemm}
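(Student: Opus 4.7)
The ``if'' direction is immediate: a permutation $\sigma \in S_{m+1}$ induces an obvious $\L$-module isomorphism by reordering tensor factors, since the defining actions on each factor commute. So the real content is the ``only if'' direction. My plan is to exploit the exponential-polynomial shape of the $L_k$-action together with Proposition \ref{pro2.3} to match up the two multisets $\{(\lambda_i,\alpha_i)\}$ and $\{(\mu_j,\beta_j)\}$ by inspecting the image of a single canonical generator under an iso.

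Assume $\phi : \bigotimes_{i=0}^m \Omega(\lambda_i,\alpha_i) \to \bigotimes_{j=0}^n \Omega(\mu_j,\beta_j)$ is an $\L$-module isomorphism, and set $g = \phi^{-1}(1) \in \C[\partial_0,\ldots,\partial_m]$, where $1 = 1\otimes\cdots\otimes 1$ on the target. On the target side, $L_k \cdot 1 = \sum_{j=0}^n \mu_j^k(\partial_j - k\beta_j)$, so applying $\phi^{-1}$ gives
\begin{equation*}
L_k g \;=\; \sum_{j=0}^n \mu_j^k\, \phi^{-1}(\partial_j - k\beta_j) \;=\; \sum_{j=0}^n \mu_j^k\bigl(A_j - k\,B_j\bigr),
\end{equation*}
where $A_j = \phi^{-1}(\partial_j)$ and $B_j = \beta_j \phi^{-1}(1) = \beta_j g$ are fixed elements of the source, independent of $k$. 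On the source side, however,
\begin{equation*}
L_k g \;=\; \sum_{i=0}^m \lambda_i^k (\partial_i - k\alpha_i)\, g(\partial_0,\ldots,\partial_i-k,\ldots,\partial_m),
\end{equation*}
and by expanding $g(\ldots,\partial_i-k,\ldots)$ as a polynomial in $k$, this has the form $\sum_{i=0}^m \lambda_i^k P_i(k)$ with $P_i(k)$ polynomial in $k$ whose leading $k$-degree is essentially $1 + \deg_{\partial_i}(g)$. Equating the two expressions and invoking Proposition \ref{pro2.3} with the distinct bases $\{\lambda_0,\ldots,\lambda_m\}\cup\{\mu_0,\ldots,\mu_n\}$ (noting that any $\mu_j$ not in $\{\lambda_i\}$ forces $A_j = B_j = 0$, contradicting invertibility of $\phi$ applied to $\partial_j$), I obtain $\{\lambda_0,\ldots,\lambda_m\} = \{\mu_0,\ldots,\mu_n\}$; in particular $m = n$ and there is a permutation $\sigma$ with $\lambda_i = \mu_{\sigma(i)}$.

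It remains to show $\alpha_i = \beta_{\sigma(i)}$. With the $\lambda_i = \mu_{\sigma(i)}$ matching in hand, comparing coefficients of $\lambda_i^k$ on both sides gives a polynomial identity in $k$ relating $(\partial_i - k\alpha_i)\,g(\ldots,\partial_i - k,\ldots)$ on the source side with $A_{\sigma(i)} - k\beta_{\sigma(i)}\, g$ on the target side (pulled back through $\phi$). Looking at the top $k$-degree term, the leading coefficient on the source side is $-\alpha_i$ times the degree-preserving leading part of $g$ in the variable $\partial_i$, while on the target side the leading $k$-coefficient contributes $-\beta_{\sigma(i)}g$. Matching these leading terms (and arguing that $g$ must in fact be a nonzero scalar, which follows by a small induction since if $g$ had positive total degree the lower-order terms could not be accounted for by the target-side polynomial in $k$) forces $\alpha_i = \beta_{\sigma(i)}$. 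The main obstacle is this last step: keeping precise track of multidegrees to show $g$ is a scalar and then reading off the $\alpha_i$'s from the highest $k$-degree coefficient; everything else is a mechanical application of Proposition \ref{pro2.3}.
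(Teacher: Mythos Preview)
The paper does not prove Lemma~\ref{lemm4.1}; it is simply quoted from \cite{TZ1}. So there is no ``paper's own proof'' to compare against, and your task is really to give a self-contained argument.

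Your approach is correct and is essentially the standard one. One comment: the step you flag as the ``main obstacle'' is in fact immediate and does not require any induction. Once you have matched $\lambda_i=\mu_{\sigma(i)}$ and equated the $\lambda_i^k$-coefficients,
\[
(\partial_i-k\alpha_i)\,g(\partial_0,\ldots,\partial_i-k,\ldots,\partial_m)\;=\;A_{\sigma(i)}-k\,\beta_{\sigma(i)}\,g,
\]
the right-hand side has $k$-degree at most $1$, while the left-hand side has $k$-degree exactly $1+\deg_{\partial_i}(g)$ (the leading term is $-\alpha_i(-1)^{d_i}c\,k^{d_i+1}$ with $d_i=\deg_{\partial_i}(g)$ and $c\neq 0$, and $\alpha_i\in\C^*$). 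Hence $\deg_{\partial_i}(g)=0$ for every $i$, so $g\in\C^*$. Plugging back in gives $g\partial_i=A_{\sigma(i)}$ and $g\alpha_i=g\beta_{\sigma(i)}$, i.e.\ $\alpha_i=\beta_{\sigma(i)}$. You should state this degree count explicitly rather than leaving it as a vague ``small induction''; as written, your last paragraph reads as if there were a genuine difficulty, when in fact the argument closes in two lines.
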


The  main result of this section is as follows:
\begin{theo}\label{th2}
Let $m,n\in\N,r,s\in\Z_+,$ $\lambda_0,\mu_0,\lambda_i,\alpha_i,\mu_j,\beta_j\in\C^*$ for
 $i=1,\ldots,m,j=1,\ldots,n$ with
$\lambda_0,\ldots,\lambda_m$ and  $\mu_0,\ldots,\mu_n$ respectively pairwise distinct. Suppose that  $V$ is an irreducible $\bar \L_{r}$-module on which $\bar L_{r}$ is injective and  $W$ is an irreducible $\bar \L_s$-module on which  $\bar L_s$ is injective.   Then $$\mathcal{M}(V,\lambda_0)\otimes\bigotimes_{i=1}^m\Omega(\lambda_i,\alpha_i)
     \cong\mathcal{M}(W,\mu_0)\otimes\bigotimes_{j=1}^n\Omega(\mu_j,\beta_j)$$
  as $\L$-modules if and only  if one of the following holds
 \begin{itemize}\parskip-3pt
\item[{\rm (a)}] $m=n$,  $V\cong V_{\alpha_0},W\cong V_{\beta_0}$ and $(\lambda_i,\alpha_i)=(\mu_{\sigma(i)},\beta_{\sigma(i)})$ for   $i=0,1,\ldots,m,\sigma\in S_{m+1};$
\item[{\rm (b)}]  $m=n,r=s,$ $V\cong W$
 as $\bar\L_r$-modules and $(\lambda_0,\lambda_i,\alpha_i)=(\mu_0,\mu_{\sigma(i)},\beta_{\sigma(i)})$ for $i=1,\ldots,m$ and some $\sigma\in S_m.$ In this case, $V$ is infinite-dimensional.
\end{itemize}

\end{theo}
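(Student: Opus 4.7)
The plan is to prove both directions, with necessity carrying most of the substance; I would organize necessity around the extraction of a $\bar\L_r$-intertwiner $\phi$ from the given $\L$-iso $\Phi$.

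\emph{Sufficiency.} Case (a) follows instantly from Remark~\ref{re2.1}(1), which identifies both sides with pure tensor products of $\Omega$-modules, and Lemma~\ref{lemm4.1}. For case (b), the key preliminary computation is the identification
\[\mathcal{M}\bigl(V^\gamma,\Omega(\lambda,\alpha')\bigr)=\mathcal{M}\bigl(V,\Omega(\lambda,\alpha'-\gamma)\bigr)\]
valid as $\L$-modules for every $\gamma\in\C$, which holds because the extra $m\gamma$ produced by shifting $\bar L_0$ recombines with the $\lambda^m(\partial-m\alpha')$ factor. This allows one to normalize both modules so that $\alpha_0=\beta_0=0$, at which point the given iso $\phi\colon V^{\alpha_0}\xrightarrow{\sim}W^{\beta_0}$ (after the sign-bookkeeping forced by the normalization), together with $\lambda_0=\mu_0$ and the permutation $\sigma\in S_m$, yields the desired iso via $\Phi\bigl(v\otimes g(\partial_0,\partial_1,\ldots,\partial_m)\bigr)=\phi(v)\otimes g(\partial_0,\partial_{\sigma^{-1}(1)},\ldots,\partial_{\sigma^{-1}(m)})$, checked $L_k$-term by $L_k$-term.

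\emph{Necessity.} Let $\Phi\colon\mathcal{M}\xrightarrow{\sim}\mathcal{M}'$. If $V$ is finite-dimensional, then $V\cong V_{\gamma_1}$ by Remark~\ref{rem2.1}(2) and $\mathcal{M}$ is a pure tensor product of $\Omega$'s. The $L_k$-action on $\mathcal{M}$ generates at most linear-in-$k$ polynomial growth in each $\lambda_i^k$-coefficient, whereas infinite-dimensionality of $W$ would force some $\mu_j^k$-coefficient of $L_kw$ to carry polynomial-in-$k$ degree at least $2$ (by the $\bar L_i$-contributions in the formula for $\mathcal{M}(W,\Omega(\mu_0,\beta_0))$, using Remark~\ref{re2.1}(2)); hence $W$ is also finite-dimensional and Lemma~\ref{lemm4.1} delivers (a). The substantive case is that both $V$ and $W$ are infinite-dimensional, where I aim at (b). Fix $0\neq v\in V$, set $w:=\Phi(v\otimes1)$, and apply $\Phi$ to the explicit formula for $L_k(v\otimes1)$. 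Repeated use of Proposition~\ref{pro2.3} separates the $\lambda_i^k$ (LHS) and $\mu_j^k$ (RHS) contributions; pairwise distinctness forces a bijection $\{\lambda_0,\ldots,\lambda_m\}\leftrightarrow\{\mu_0,\ldots,\mu_n\}$ and hence $m=n$. The ``special'' index is isolated by observing that only the $\lambda_0^k$-coefficient (LHS) and only the $\mu_0^k$-coefficient (RHS) can carry polynomial-in-$k$ growth of degree $>1$, and the injectivity of $\bar L_r$ guarantees the $k^{r+1}$-term is genuinely non-zero; matching these forces $\lambda_0=\mu_0$ and $r=s$. The remaining indices yield the permutation $\sigma\in S_m$ with $(\lambda_i,\alpha_i)=(\mu_{\sigma(i)},\beta_{\sigma(i)})$ from the Proposition~\ref{pro2.3} analysis of the lower-order coefficients.

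\emph{Construction of the $\bar\L_r$-iso.} To finish, I would define $\phi\colon V\to W$ via the ``leading'' component of $\Phi(v\otimes1)$: writing $\Phi(v\otimes1)=\sum_\mathbf{q}w_\mathbf{q}\otimes\partial_0^{q_0}\cdots\partial_n^{q_n}$, take $\phi(v):=w_{\mathbf{q}^\ast}$ for a distinguished monomial $\mathbf{q}^\ast$ (its existence and nontriviality guaranteed by the generation argument of Lemma~\ref{lemm1}). The $\L$-equivariance of $\Phi$, combined with the normalization from the sufficiency step absorbing the $\alpha_0,\beta_0$ shifts, then translates into exactly the intertwining relations asserting $V^{\alpha_0}\cong W^{\beta_0}$ as $\bar\L_r$-modules. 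The main obstacle I foresee is two-fold: (i) the rigorous disentanglement of the many $\lambda_i^k$-components in the presence of mixed monomials in $\partial_0,\partial_1,\ldots$, which requires layered applications of Proposition~\ref{pro2.3}; and (ii) the coherent choice of $\mathbf{q}^\ast$ uniformly in $v$ so that $\phi$ is well-defined on all of $V$, together with the verification that the resulting intertwining relations are precisely the ones required by~(b).
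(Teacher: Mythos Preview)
Your overall strategy matches the paper's: expand $\Phi\bigl(L_k(v\otimes 1)\bigr)=L_k\Phi(v\otimes 1)$ and separate the $\lambda_i^k$- and $\mu_j^k$-contributions via Proposition~\ref{pro2.3}; your sufficiency and finite-dimensional arguments are fine. The substantive gap is the claim that matching the $\lambda_0^k$- and $\mu_0^k$-components ``forces $\lambda_0=\mu_0$ and $r=s$.'' After $\lambda_0=\mu_0$ and $p_j=0$ for $j\ge 1$ are established, the $\lambda_0^k$-component on the left has $k$-degree exactly $r+1$ (injectivity of $\bar L_r$ on $V$), while the right side has $k$-degree exactly $s+1+P$ with $P=\max\{p_0:w_{(p_0,0,\ldots,0)}\neq 0\}$ (injectivity of $\bar L_s$ on $W$). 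You therefore obtain only $r=s+P$, not $r=s$. The paper closes this with a second pass: it expands $\Phi\bigl(L_{n-k}L_k(v\otimes 1)\bigr)=L_{n-k}L_k\Phi(v\otimes 1)$ and compares the highest $k$-degree to force $r=s$ and $P=0$ simultaneously. (An alternative route is to run the first-order argument on $\Phi^{-1}(w\otimes 1)$ to get $s=r+P'$ and conclude $P=P'=0$.)

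This gap is exactly the source of your obstacle (ii). Once $P=0$ is known, one has $\Phi(v\otimes 1)=\varphi(v)\otimes 1$ with $\varphi(v)\in W$ defined directly---no ``distinguished monomial $\mathbf{q}^*$'' is needed, and coherence in $v$ is automatic. The simplified first-order equation then reads off $\varphi\bigl((\bar L_i-\alpha_0\delta_{i,0})v\bigr)=(\bar L_i-\beta_0\delta_{i,0})\varphi(v)$ for all $i=0,\ldots,r$, which is precisely $V^{\alpha_0}\cong W^{\beta_0}$.
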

\begin{proof}
We only show the ``only if\," part, as the ``if\," part is trivial.
Denote $$\mathcal R=\mathcal{M}(V,\lambda_0)\otimes\bigotimes_{i=1}^m\Omega(\lambda_i,\alpha_i)\quad {\rm and}\quad
\mathcal T=\mathcal{M}(W,\mu_0)\otimes\bigotimes_{j=1}^n\Omega(\mu_j,\beta_j).$$  Let $\phi$ be an isomorphism from $\mathcal R$ to   $\mathcal T$.
For any fixed nonzero  $v\in V$, assume
\begin{equation}\label{ass222}
\phi(v\otimes {\bf1}_{m+1})=\sum\limits_{\mathbf{p}\in I}w_{\mathbf{p}}\otimes\partial_0^{p_0}\partial_1^{p_1}\cdots\partial_n^{p_n},
\end{equation}
where $I$ is a finite subset of $\Z_+^{n+1}$ and all $w_{\mathbf{p}}$ are  nonzero in $W$. Note  from  $\phi(L_k(v\otimes {\bf1}_{m+1}))=L_k\phi(v\otimes {\bf1}_{m+1})$ that we have
\begin{eqnarray}\label{4.42}
&&\lambda_0^k\phi(v\otimes\partial_0\otimes{\bf1}_{m})
+\sum_{i=0}^{r}\lambda_0^k\phi\big((\frac{k^{i+1}}{(i+1)!}\bar L_i)v\otimes {\bf1}_{m+1}\big)\nonumber
\\&&+\ \phi\big(v\otimes1\otimes\sum_{i=1}^m\lambda_i^k(1\otimes\cdots
\otimes(\partial_i-k\alpha_i)\otimes\cdots\otimes1)\big) \nonumber
\\&=&\sum_{\mathbf{p}\in I}\Big(\big( w_{\mathbf{p}}\otimes\mu_0^k{\partial}_0({\partial}_0-k)^{p_0}+\sum_{i=0}^{s}
(\frac{k^{i+1}}{(i+1)!}\bar L_i)w_{\mathbf{p}}\otimes \mu_0^k(\partial_0-k)^{p_0}\big)\otimes\partial_1^{p_1}\cdots\partial_n^{p_n}\nonumber
\\
&&+\,w_{\mathbf{p}}\otimes{\partial}_0^{p_0}\otimes\sum_{j=1}^n\big({\partial}_1^{p_1}
\otimes\cdots\otimes\mu_j^k({\partial}_j-k\beta_j)
({\partial}_j-k)^{p_j}\otimes\cdots\otimes{\partial}_n^{p_n}\big)\Big).
\end{eqnarray}
If $V$ is  finite-dimensional,  then by Remark \ref{rem2.1},  $V\cong V_{\alpha_0}$ for some $\alpha_0\in\C^*$  and the power of $k$ in \eqref{4.42} is less than $1$.
By Proposition \ref{pro2.3},  we observe from \eqref{4.42} that
$m=n$ and   $W\cong V_{\beta_0}$ for some $\beta_0\in\C^*$. Then by  Lemma \ref{lemm4.1}, there exists $\sigma\in S_{m+1}$ such that $(\lambda_i,\alpha_i)=(\mu_{\sigma(i)},\beta_{\sigma(i)})$ for  $i=0,1,\ldots,m$. This is (a).

Consider that $V$ is infinite-dimensional. Then  applying Proposition \ref{pro2.3} in   \eqref{4.42}, we can obtain $m=n$ and
\begin{eqnarray*}
&&\label{4.34rew1} {\rm (I)}: {\rm there\ exists\ } \sigma\in S_m \ {\rm such\ that}\ (\lambda_0,\lambda_i,p_i)=(\mu_0,\mu_{\sigma(i)},0)
\quad{\rm for}\ i=1,\ldots,m,
\\&&\label{4.34rew2}\mathrm{or\ (II)}:{\rm there \ exists\ }  i^\prime> 0\  {\rm such\ that}\ \lambda_0=\mu_{i^\prime},
r=p_{i^\prime}\ {\rm and}\ p_{j}=s=0\ {\rm for}\ j\neq i^\prime.
 \end{eqnarray*}
Expanding
 $\phi\big(L_{l-k}L_{k}(v\otimes\mathbf{1}_{m+1})\big)=L_{l-k}L_{k}\phi(v\otimes \mathbf{1}_{m+1})$,
we have
\begin{eqnarray}\label{aswe4.42}
&&\nonumber
\lambda_0^l\phi\big(\big(v\otimes\partial_0(\partial_0-l+k)
+\sum_{i=0}^{r}\frac{(l-k)^{i+1}}{(i+1)!}\bar L_iv\otimes(\partial_0-l+k)\big)\otimes\mathbf{1}_m\big)
\\&&
+\ \lambda_0^l\phi\big(\big(\sum_{i=0}^{r}\frac{k^{i+1}}{(i+1)!}\bar L_iv\otimes \partial_0
+\sum_{j=0}^{r}\sum_{i=0}^{r}\frac{(l-k)^{j+1}k^{i+1}}{(j+1)!(i+1)!}\bar L_j\bar L_iv\otimes 1\big)\otimes\mathbf{1}_m\big)\nonumber
\\&&\nonumber+\ \lambda_0^k\phi\big((v\otimes\partial_0
+\sum_{j=0}^{r}\frac{k^{j+1}}{(j+1)!}\bar L_j v\otimes 1)\otimes\sum_{i=1}^m\lambda_i^{l-k}(1\cdots(\partial_i-(l-k)\alpha_i)\cdots1)\big) {\color{blue} \otimes ??? }
\\&&\nonumber+\ \lambda_0^{l-k} \phi\big(\big(v\otimes\partial_0+\sum_{j=0}^{r}\frac{(l-k)^{j+1}}{(j+1)!}\bar L_j v\otimes 1\big)\otimes\sum_{i=1}^m\lambda_i^k(1\cdots
(\partial_i-k\alpha_i)\cdots1)\big)
\\&&\nonumber+\ \phi\big(v\otimes1\otimes\sum_{j=1,j\neq i}^m\sum_{i=1}^m\lambda_i^k\lambda_j^{l-k}(1\otimes\cdots(\partial_j-(l-k)\alpha_j)
\cdots(\partial_i-k\alpha_i)\cdots\otimes1)\big)
\\&&+\ \phi\big(v\otimes1\otimes\sum_{i=1}^m\lambda_i^l(1\otimes\cdots\otimes(\partial_i-(l-k)\alpha_i)
(\partial_i-k\alpha_i-l+k)\otimes\cdots\otimes1)\big)
\nonumber
\\&=&\nonumber\sum_{\mathbf{p}\in I}\Big(\big( w_{\mathbf{p}}\otimes\mu_0^l\partial_0(\partial_0-l+k)({\partial}_0-l)^{p_0}
\\&&\nonumber+\ \sum_{i=0}^{s}\frac{(l-k)^{i+1}}{(i+1)!}\bar L_iw_{\mathbf{p}}\otimes\mu_0^l(\partial_0-l+k)({\partial}_0-l)^{p_0}
+\sum_{i=0}^{s}\frac{k^{i+1}}{(i+1)!}\bar L_iw_{\mathbf{p}}\otimes \mu_0^l\partial_0(\partial_0-l)^{p_0}
\\&&\nonumber+\sum_{j=0}^{s}\sum_{i=0}^{s}\frac{(l-k)^{j+1}k^{i+1}}{(j+1)!(i+1)!}\bar L_j\bar L_iw_{\mathbf{p}}\otimes \mu_0^l(\partial_0-l)^{p_0}\big)\otimes\partial_1^{p_1}\cdots\partial_m^{p_m}\nonumber
\\&&\nonumber
+\ \big( w_{\mathbf{p}}\otimes\mu_0^k{\partial}_0({\partial}_0-k)^{p_0}
+\sum_{i=0}^{s}\frac{k^{i+1}}{(i+1)!}\bar L_iw_{\mathbf{p}}\otimes \mu_0^k(\partial_0-k)^{p_0}\big)
\\&&\nonumber
\otimes\ \sum_{i=1}^m\big({\partial}_1^{p_1}
\otimes\cdots\otimes\mu_i^{l-k}({\partial}_i-(l-k)\beta_i)
({\partial}_i-l+k)^{p_i}\otimes\cdots\otimes{\partial}_m^{p_m}\big)
\\&&\nonumber
+\ \big(w_{\mathbf{p}}\otimes\mu_0^{l-k}\partial_0(\partial_0-l+k)^{p_0}
+\sum_{i=0}^{s}\frac{(l-k)^{i+1}}{(i+1)!}\bar L_iw_{\mathbf{p}}\otimes \mu_0^{l-k}(\partial_0-l+k)^{p_0}\big)
\\&&\nonumber
\otimes\ \sum_{j=1}^m\big({\partial}_1^{p_1}
\otimes\cdots\otimes\mu_j^k({\partial}_j-k\beta_j)
({\partial}_j-k)^{p_j}\otimes\cdots\otimes{\partial}_m^{p_m}\big)
\\
&&\nonumber+\ w_{\mathbf{p}}\otimes{\partial}_0^{p_0}\otimes\sum_{j=1,
j\neq i}^m\sum_{i=1}^m\big({\partial}_1^{p_1}\otimes
\cdots\mu_j^{l-k}({\partial}_j-(l-k)\beta_j)
({\partial}_j-l+k)^{p_j}\otimes\cdots
\\&&\nonumber\otimes\ \mu_i^k({\partial}_i-k\beta_i)
({\partial}_i-k)^{p_i}\otimes\cdots{\partial}_m^{p_m}\big)
+w_{\mathbf{p}}\otimes{\partial}_0^{p_0}
\\&&\otimes\ \sum_{i=1}^m\big({\partial}_1^{p_1}\otimes
\cdots\mu_i^l({\partial}_i-(l-k)\beta_i)({\partial}_i-k\beta_i-l+k)
({\partial}_i-l)^{p_i}\cdots\otimes\partial_m^{p_m}\big)\Big).
\end{eqnarray}
Now we apply Proposition  \ref{pro2.3} and consider  the coefficients of
 $k^{2r+2}$ in \eqref{aswe4.42}. In the case (II) one has $\phi(\bar L_r^2v\otimes\mathbf{1}_{m+1})=0$, which is impossible. And in the case (I) one has $r=s$.  It follows from  this and   \eqref{4.42} one has $p_0=0$. That is, we have
$
\phi(v\otimes \mathbf{1}_{m+1})=w_{\mathbf{0}}\otimes\mathbf{1}_{m+1},
$ which allows us to define an injective linear map $\varphi:V\rightarrow W$ such that $\phi(v\otimes \mathbf{1}_{m+1})=\varphi (v)\otimes \mathbf{1}_{m+1}$ for $v\in V$.  Thus,  \eqref{4.42} can be written as
\begin{eqnarray*}
&&\lambda_0^k\phi(v\otimes\partial_0\otimes\mathbf{1}_m)
+\sum_{i=0}^{r}\lambda_0^kk^{i+1}\phi\Big(\big(\frac{1}{(i+1)!}\bar L_i\big)v\otimes \mathbf{1}_{m+1}\Big)\nonumber
\\&&+\ \phi\Big(v\otimes1\otimes\sum_{i=1}^m\lambda_i^k(1\otimes\cdots
\otimes(\partial_i-k\alpha_i)\otimes\cdots\otimes1)\Big)
\\&=&\mu_0^k\varphi (v)\otimes{\partial}_0\otimes\mathbf{1}_m
+\sum_{i=0}^{r}\mu_0^kk^{i+1}\big(\frac{1}{(i+1)!}\bar L_i\big)\varphi (v)\otimes \mathbf{1}_{m+1}\nonumber
\\&&+\ \varphi (v)\otimes1\otimes\sum_{j=1}^m\mu_j^k(1\otimes\cdots\otimes(\partial_j-k\beta_j)\otimes\cdots\otimes1),
\end{eqnarray*}
which together with Proposition \ref{pro2.3}  implies      $$\varphi\big(\bar L_iv\big)=\bar L_i\varphi(v), \ \forall i=0,1,\ldots, r,$$
i.e.,  $V\cong W$ and there exists $\sigma\in S_m$ such that $(\lambda_i,\alpha_i)=(\mu_{\sigma(i)},\beta_{\sigma(i)})$
for  $1\le i\le m$.
That is, we have (b).
\end{proof}

\section{New irreducible modules}
In this section, we compare the tensor product $\L$-modules  $\mathcal{M}(V,\lambda_0)\otimes\bigotimes_{i=1}^m\Omega(\lambda_i,\alpha_i)$
with other known irreducible non-weight Virasoro modules.
By Remark \ref{re2.1}, we only consider the case when $V$ is infinite-dimensional in the following.

Let us first recall  irreducible non-weight $\L$-modules from \cite{H,CH,LGW,LLZ,TZ1,MZ,LZ2}.
 Fix any $\lambda,\alpha\in\C^*$ and $h(t)\in\C[t]$ such that ${\rm \widetilde{deg}}(h(t))=1$. For any $m\in\Z$,  define
$\widehat{h}_m(t)=mh(t)-m(m-1)\alpha\frac{h(t)-h(\alpha)}{t-\alpha}.$ Then $\Phi(\lambda,\alpha,h(t))=\C[\partial,t]$ carries the structure of an irreducible $\L$-module:
$L_mf(\partial,t)=\lambda^m(\partial+\widehat{h}_m(t))f(t,\partial-m)-m\lambda^m(t-m\alpha)\frac{d}{d t}(f(t,\partial-m))$.

Let $V$ be  an irreducible  $\L$-module for which there exists $R_V\in\Z_+$ such that $L_m$ is locally nilpotent on $V$ for all $m\geq R_V$.
It was  shown respectively in \cite{H,LGW,TZ1} that the tensor product  $\L$-modules   $\bigotimes_{i=1}^n\Phi(\lambda_i,\alpha_i,h_i(t))\otimes V$, $\bigotimes_{i=1}^n\Phi(\lambda_i,\alpha_i, h_i(t))\otimes\bigotimes_{j=n+1}^{n+q}\Omega(\mu_j,\beta_j)\otimes V$ and  $\bigotimes_{i=1}^{n}\Omega(\lambda_i,\alpha_i)\otimes V$ are  irreducible if $\lambda_1,\ldots,\lambda_n,\mu_1,\ldots,\mu_m$ are pairwise distinct.
%These are a generalization of  tensor product $\L$-modules of $\Omega(\lambda,\alpha)$ with $\mathrm{Ind}(M)$ or the highest weight modules (see \cite{TZ}).

%For any $\xi\in\C$, we denote
%$$\L_+^\xi=\bigoplus_{i\in\N}\C(t-\xi)^{i+1}\partial,\ \L_0^\xi=\C(t-\xi)\partial,\ \L_-^\xi=\bigoplus_{i\in\N}\C t^{-i+1}\partial.$$
%The {\em highest-weight-like Verma } $\L$-module is defined by
%$$M(\xi,\lambda)=U(\L)\otimes_{U(\L_+^\xi+\L_0^\xi)}\C v,$$
%where $\L_+^\xi\cdot v=0$ and $(t-\xi)\partial\cdot v=\lambda v$  for some $\lambda\in\C$.
%Denote by $R_a$  the subalgebra of the rational field in the indeterminant $t$, which  generated by $t,(t-a_0)^{-1},\ldots,(t-a_n)^{-1}$.
%The fraction module $V(a,\alpha,\beta)$ of $\L$-action is defined as follows
%$$(f\partial)\circ(gz)=f\partial(gz)+\beta gz\partial(f),$$
%where $f \in\C[t^{\pm1}], g\in R_a,z=t^{\alpha_0}(t-\alpha_1)^{\alpha_1}\cdots(t-\alpha_n)^{\alpha_n}$. The simplicity of $\L$-module
%$V(a,\alpha,\beta)$ was determined in \cite{GLZ}.

Let $b\in\C$ and $A$ be an irreducible module over the associative algebra $\mathcal{K}=\C[t^{\pm1}, t\frac{d}{dt}].$  The  action of $\L$  on $A_b:=A$ is given by
$$Cv=0, L_nv=(t^{n+1}\frac{d}{dt}+nbt^n)v\quad \mbox{for}\ n\in\Z,v\in A.$$
It was proved in \cite{LZ2} that $A_b$ is an irreducible $\L$-module if and only if one of the following conditions holds: (1) $b\neq 0\ \mbox{or}\ 1$; (2) $b=1$ and $t\frac{d}{dt} A =A$; (3) $b=0$ and $A$ is not isomorphic to the natural $\mathcal{K}$-module $\C[t,t^{-1}]$.
%\begin{itemize}\lineskip0pt\parskip-1pt \item[\rm (1)] $b\neq 0\ \mbox{or}\ 1$; \item[\rm(2)] $b=1$ and $t\frac{d}{dt} A =A$; \item[\rm(3)]  $b=0$ and $A$ is not isomorphic to the natural $\mathcal{K}$ module $\C[t,t^{-1}]$.\end{itemize}

 Let $V$  be an $\bar\L_{r}$-module.
For any $a\in\C$ and $\gamma(t)=\sum_{i}c_it^{i}\in\C[t,t^{-1}]$,
define the  action of $\L$ on $V\otimes \C[t,t^{-1}]$ as follows
\begin{eqnarray*}\label{l6.1}
&L_m(v\otimes t^n)=\Big(a+n+\sum_{i=0}^r\frac{m^{i+1}}{(i+1)!} \bar L_i\Big)v\otimes t^{m+n}+\sum_{i}c_iv\otimes  t^{n+i},\\
&C(v\otimes t^n)=0 \quad {\rm for}\ m,n\in\Z, v\in V.
 \end{eqnarray*}
Then $V\otimes \C[t,t^{-1}]$ carries the structure of an $\L$-module under the  above given actions, which is denoted by ${\mathcal M}(V,\gamma(t))$. Note from \cite{LLZ} that  ${\mathcal M}(V,\gamma(t))$ is a weight $\L$-module if and only if $\gamma(t)\in \C$ and also that
the $\L$-module ${\mathcal M}(V,\gamma(t))$ is irreducible if and only if $V$ is irreducible.

Let  $\C[[t]]$ be the algebra of formal power series.
 Denote
 $e^{mt}=\sum_{i=0}^\infty\frac{(mt)^i}{i!}\in\C[[t]]$ and
 $\H=\mathrm{span}\{L_i\mid i\geq-1\}$.
Assume that $V$ is an $\H$-module.
For any $\mu,\lambda\in\C^*$ and $ \alpha\in\C$, define an $\L$-action  on the vector space $\mathcal{M}\big(V,\mu,\Omega(\lambda,\alpha)\big)=V\otimes \C [\partial]$ as follows
 \begin{eqnarray*}
 &\label{Lm2.1}  L_m\big(v\otimes f(\partial)\big)=v\otimes\lambda^m(\partial-m\alpha)f(\partial-m)+(\mu^me^{mt}-1)\frac{d}{dt}v\otimes \lambda^mf(\partial-m),\\&
\label{C1232.3}  C\big(v\otimes f(\partial)\big)=0\quad {\rm for}\ m\in\Z,v\in V, f(\partial)\in\C[\partial].
 \end{eqnarray*}
 Let $\mathcal C$ denote the set of all nontrivial $\H$-modules $V$ satisfying the following condition:
for any $0\neq v\in V$  there  exists $r_v\in\Z_+$  such  that  $L_{r_v+i}v=0$  for all  $i\geq 1$. Given any  irreducible $\H$-module $V$    in $\mathcal C$,
 it was proved in \cite{CH}  that    the $\L$-module $\mathcal{M}\big(V,\mu,\Omega(\lambda,\alpha)\big)$ is irreducible   if and only if $\alpha\neq0$ and $\mu\neq 1$.

 \begin{prop}\label{5.1}
Let $r\in\N$ and $\lambda_0,\lambda_1,\alpha_1\in\C^*$ with $\lambda_0\neq\lambda_1$. Suppose  that $V$ is an irreducible $\bar \L_{r}$-module such that the action of $\bar L_r$ on $V$
is injective and $V^\prime=\C[L_{-1}]V$($V$ can be seen as an $\L_{+}$-module by setting $L_iV=0$ for $i>r$) is an irreducible $\H$-module in $\mathcal{C}$.
Then the $\L$-modules $\mathcal{M}(V,\lambda_0)\otimes\Omega(\lambda_1,\alpha_1)$ and
$\mathcal{M}\big(V^\prime,\lambda_0\lambda_1^{-1},\Omega(\lambda_1,\alpha_1)\big)$ are isomorphic.
\end{prop}
\begin{proof}
Define a linear  isomorphism
\begin{eqnarray*}
\phi:\quad  \mathcal{M}(V,\lambda_0)\otimes\Omega(\lambda_1,\alpha_1)\quad&\longrightarrow& \mathcal{M}\big(V^\prime,\lambda_0\lambda_1^{-1},\Omega(\lambda_1,\alpha_1)\big)
\\ v\otimes \partial_0^m\otimes\partial_1^n&\longmapsto& \sum_{j=0}^n(-1)^j\binom{n}{j}L_{-1}^{m+j}v\otimes\partial^{n-j}.
\end{eqnarray*}
 Then for any  $k,m,n\in\Z$ and $v\in V$  we have
 \begin{eqnarray}\label{ghbf667}\nonumber
  &&\phi(L_k(v\otimes \partial_0^m\otimes\partial_1^n))
  \\&=&\nonumber \lambda_0^k\phi(v\otimes \partial_0(\partial_0-k)^m\otimes\partial_1^n)
  +\lambda_0^k\phi(\sum_{i=0}^{r}\big(\frac{k^{i+1}}{(i+1)!}\bar L_iv\otimes (\partial_0-k)^m\otimes\partial_1^n)
  \\&&+\lambda_1^k\phi(v\otimes \partial_0^m\otimes(\partial_1-k\alpha_1)(\partial_1-k)^n)
  \end{eqnarray}
  and
  \begin{eqnarray}\label{ghbf668}\nonumber
  &&L_k(\sum_{j=0}^n(-1)^j\binom{n}{j}L_{-1}^{m+j}v\otimes\partial^{n-j})
  \\&=&\nonumber \lambda_1^k(\sum_{j=0}^n(-1)^j\binom{n}{j}L_{-1}^{m+j}v\otimes(\partial-k\alpha_1)(\partial-k)^{n-j})
  \\&&\nonumber+\ \lambda_1^k((\lambda_0\lambda_1^{-1})^ke^{kt}-1)\frac{d}{dt}
  (\sum_{j=0}^n(-1)^j\binom{n}{j}L_{-1}^{m+j})v\otimes (\partial-k)^{n+j}
  \\&=&\nonumber \lambda_1^k\big(L_{-1}^{m}v\otimes(\partial-k\alpha_1)(\partial-L_{-1}-k)^{n}
  -L_{-1}^{m+1}v\otimes(\partial-L_{-1}-k)^{n}\big)
  \\&&+\ \lambda_0^k(L_{-1}-k)^{m}(\sum_{i=0}^{r+1}\frac{k^{i}}{i!}L_{i-1})v\otimes (\partial-L_{-1})^{n}.
  \end{eqnarray} It is worthwhile to point out that in  \eqref{ghbf668}  we have used the formula  $$\sum_{j=0}^n(-1)^j\binom{n}{j}L_{-1}^{m+j}v\otimes\partial^{n-j}
=L_{-1}^{m}v\otimes(\partial-L_{-1})^{n}.$$
Now by
\eqref{ghbf667} and \eqref{ghbf668} we see that    $$\phi(L_k(v\otimes \partial_0^m\otimes\partial_1^n))=L_k(\sum_{j=0}^n(-1)^j\binom{n}{j}L_{-1}^{m+j}v\otimes\partial^{n-j})$$ holds. Thus, $\phi$ is an isomorphism between the two modules.
 \end{proof}

  \begin{remark}\rm
As a co-product of the Proposition \ref{5.1} and Theorem \ref{th1}, we in fact have determined  the irreducibility of the tensor product $\L$-modules
$\mathcal{M}\big(V,\mu,\Omega(\lambda_0,\alpha_0)\big)\otimes\bigotimes_{i=1}^m\Omega(\lambda_i,\alpha_i)$
for  $V\in \mathcal{C},1\neq\mu\in\C,m\geq2$ and $\lambda_i,\alpha_i\in \C^*$.
\end{remark}

\begin{prop}
Suppose that  $V$ is an irreducible $\bar \L_{r}$-module on which  $\bar L_r$ is injective and  $V^\prime$  is an irreducible $\bar \L_{r^\prime}$-module on which    $\bar L_{r^\prime}$ is injective.
Then  $\mathcal{M}(V,\lambda_0)\otimes\bigotimes_{i=1}^m\Omega(\lambda_i,\alpha_i)$ is not isomorphic  to any of the following irreducible $\L$-modules:
\begin{eqnarray*}
&&{\mathcal M}(V^\prime,\gamma(t)),\
   \bigotimes_{j=1}^n\Phi\big(\mu_j,\beta_j,h_j(t)\big)\otimes\bigotimes_{j=n+1}^{n+q}\Omega(\mu_j,\beta_j)\otimes M,\  A_b,\ M,
\end{eqnarray*}
where  $m,n,q\geq1,\lambda_0,\lambda_i,\alpha_i,\mu_j,\beta_j\in\C^*, b\in\C,$ $h_i(t)\in\C[t]$ with ${\rm {deg}}(h_i(t))=1,$ $\lambda_0,\ldots,\lambda_m,$ $\mu_1,\ldots,\mu_{n+q}$ being pairwise distinct$,$ $M$ is an irreducible $\L$-module for which there exists $R_M\in\Z_+$ such that  $L_m$  is locally nilpotent on $M$ for all $m\geq R_M$.
\end{prop}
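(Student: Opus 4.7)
The plan is to parallel the structure of the proof of Proposition~\ref{5.1}: for each of the six target modules we assume, for a contradiction, that an $\L$-module isomorphism $\phi$ exists, fix a nonzero $v\in V$, expand $\phi(v\otimes 1)$ in the natural basis of the target, and then apply the defining relation $\phi\big(L_k(v\otimes 1)\big)=L_k\phi(v\otimes 1)$ for varying $k\in\Z$. Writing both sides as exponential-polynomial functions of $k$, Proposition~\ref{pro2.3} will separate the contributions coming from distinct base exponentials, and a structural mismatch will force the contradiction.

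The key LHS feature to exploit is that
$$L_k(v\otimes 1)=\lambda_0^k P_0(k)+\sum_{j=1}^m \lambda_j^k P_j(k),$$
where the $\lambda_i$ are pairwise distinct elements of $\C^*$, $P_0(k)$ is a polynomial in $k$ of degree exactly $r+1$ whose leading $k^{r+1}$-coefficient is the nonzero vector $\tfrac{1}{(r+1)!}\bar L_r v\otimes 1\otimes\cdots\otimes 1$ (using injectivity of $\bar L_r$ on $V$ from Remark~\ref{re2.1}(2)), and each $P_j(k)$ with $j\ge 1$ is affine in $k$ with nonzero $k$-coefficient $-\alpha_j\,v\otimes 1\otimes\cdots\otimes 1$.

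The cases then go as follows. For the target $M$ (case (6)), iterating $L_k$ on $v\otimes 1$ produces a sequence of elements of strictly increasing $\partial_0$-degree, none of which vanish, so $L_k$ cannot be locally nilpotent on $v\otimes 1$ for any $k$, contradicting the defining property of $M$. For the targets $\mathcal{M}(V^\prime,\gamma(t))$ (case (1)) and $A_b$ (case (5)), the $L_k$-action, expanded in the natural basis of the target, has only polynomial-in-$k$ coefficients (no nontrivial exponentials $\mu^k$ with $\mu\neq 1$). Proposition~\ref{pro2.3} will then force every $\lambda_j^k$-contribution on the left with $\lambda_j\neq 1$ to vanish under $\phi$, so that either $\bar L_r v\otimes 1\otimes\cdots\otimes 1$ or some $\alpha_j\,v\otimes 1\otimes\cdots\otimes 1$ lies in $\ker\phi$, contradicting injectivity of $\phi$ (here we use that at most one of the $\lambda_i$ can equal $1$ since they are pairwise distinct, and that $m\geq 1$).

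For the remaining cases (2)--(4), the RHS carries the exponentials $\{\mu_j\}$ (and $\{\nu_j\}$ in (4)) coming from the $\Omega$ and $\Phi$ factors, while the contribution from $M$ is eventually locally nilpotent and therefore yields only a trivial-exponential residual once $k\geq R_M$. Proposition~\ref{pro2.3} will then force $\{\lambda_0,\ldots,\lambda_m\}$ into the exponential set of the RHS, after which a comparison of polynomial-in-$k$ degrees (applying $L_k$ a second time to amplify the $\bar L_r$-contribution on the LHS, while the $\partial_j$-support of $\phi(v\otimes 1)$ bounds the analogous degrees on the RHS) will produce the contradiction. The hardest case will be (4), where the $\Phi$, $\Omega$ and $M$ contributions must all be disentangled simultaneously; here the careful tracking of per-exponential polynomial degrees in $k$, combined with the finiteness of the $\partial_j$-support of the image $\phi(v\otimes 1)$, is the decisive ingredient.
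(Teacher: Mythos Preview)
Your plan follows the same architecture as the paper's proof: fix $v\otimes 1$, expand $\phi(L_k(v\otimes 1))=L_k\phi(v\otimes 1)$, invoke Proposition~\ref{pro2.3} to isolate the $\lambda_i^k$-contributions, and derive a contradiction from the injectivity of $\phi$ together with $\bar L_r v\neq 0$; for the stubborn $\Omega$/$\Phi$ targets you iterate to $L_kL_{n-k}$ and compare top $k$-degrees, and for $M$ you use that no $L_k$ is locally nilpotent on the source. This is exactly the paper's route (the paper even outsources the $\Phi$ cases to the analogous argument in~\cite{CH}).

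One point to tighten in your write-up for ${\mathcal M}(V',\gamma(t))$ and $A_b$: you say the RHS has ``only polynomial-in-$k$ coefficients'' and then invoke Proposition~\ref{pro2.3}. But in both targets the basis vectors themselves shift with $k$ (e.g.\ $L_k(w\otimes t^n)$ produces $w'\otimes t^{k+n}$), so the RHS is \emph{not} of the form $\sum_j f_j(k)\,u_j$ with fixed $u_j$, and Proposition~\ref{pro2.3} does not apply verbatim to the difference. The fix is easy: expand the fixed LHS vectors $\phi(v\otimes\partial_i)$, $\phi(\bar L_iv\otimes 1)$, $\phi(v\otimes 1)$ in the natural basis of the target; for each fixed basis vector the LHS coefficient is a genuine exponential-polynomial in $k$, while the RHS coefficient is nonzero for only finitely many $k$ (because of the $t$-shift), hence the LHS coefficient vanishes identically, and then Proposition~\ref{pro2.3} applied to these identically-zero exponential-polynomials gives the desired vanishing of $\phi(\bar L_r v\otimes 1)$ or $\phi(v\otimes 1)$. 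The paper is equally terse here, but you should make this step explicit.
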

\begin{proof}Denote $\mathcal{M}(V,\lambda_0)\otimes\bigotimes_{i=1}^m\Omega(\lambda_i,\alpha_i)$ by $\mathcal T$.
Suppose that  $$\phi: \mathcal T\rightarrow {\mathcal M}(V^\prime,\gamma(t))$$  is an  isomorphism of $\L$-modules.
  Take
  any $0\neq v\in V$ and assume that
  \begin{eqnarray*}\label{5.11}
  \phi(v\otimes\mathbf{1}_{m+1})=\sum_{i=1}^n\sum_j b_{i,j} w_i\otimes t^{m+j}
  \end{eqnarray*}
  for $m\in\Z,n\in\N,b_{i,j}\in\C,w_i\in V^\prime$.     Comparing   the
coefficients of $(\lambda_0^{-1}\lambda_1)^kk^{r+2}$ on both sides of
  $$\phi(L_{l-k}L_k(v\otimes\mathbf{1}_{m+1}))=\sum_{i=1}^n\sum_j b_{i,j} L_{l-k}L_k( w_i\otimes t^{m+j}),$$
  gives
   $\phi\big( \bar L_rv\otimes \mathbf{1}_{m+1}\big)=0$, contradicting  $\bar L_rv\otimes \mathbf{1}_{m+1}\neq0$. Thus, $\mathcal T\ncong {\mathcal M}(V^\prime,\gamma(t)).$
And we can similarly conclude that     $\mathcal T\ncong A_b$.

Since the action of $L_m$ for each $m\in\Z$ on  $\mathcal{M}(V,\lambda_0)\otimes\bigotimes_{i=1}^m\Omega(\lambda_i,\alpha_i)$ is not locally
  nilpotent, one has $\mathcal{M}(V,\lambda_0)\otimes\bigotimes_{i=1}^m\Omega(\lambda_i,\alpha_i)\ncong M$.

Denote $\mathcal{P}:=\bigotimes_{j=1}^n\Phi\big(\mu_j,\beta_j,h_j(t)\big)\otimes\bigotimes_{j=n+1}^{n+q}\Omega(\mu_j,\beta_j)\otimes M$.
By the similar argument as  in \cite[Lemma 3]{LLZ} one can verify that $T_{l,k}^{(h)}(\mathcal T)=0$ for $h>2r+2$, where 
$$
T_{l,k}^{(h)}=\sum_{i=0}^h(-1)^{h-i}{h \choose i}L_{l-k-i}L_{k+i}\in\mathcal{U}(\L).$$  Now by \cite[Lemma 5.1]{LGW} we have $\mathcal T\ncong \mathcal{P}.$
This completes the proof.
\end{proof}
Now we can conclude this section with the following corollary.
\begin{coro}
Let $m\in\N\setminus\{1\},r\in\Z_+,\lambda_0,\lambda_i,\alpha_i\in\C^*$ for $i=1,2,\ldots,m$ with
$\lambda_0,\ldots, \lambda_m$ pairwise distinct and  let    $V$ be an irreducible $\bar \L_{r}$-module
 such that the action of $\bar L_{r}$  on  $V$ is injective. Suppose that   $V$ is infinite-dimensional. Then $\mathcal{M}(V,\lambda_0)\otimes\bigotimes_{i=1}^m\Omega(\lambda_i,\alpha_i)$ is not isomorphic to any irreducible $\L$-module in \cite{H,CH,LGW,LLZ,TZ1,MZ,LZ2}.
\end{coro}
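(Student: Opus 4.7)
The plan is to observe that this corollary is essentially a bookkeeping wrap-up of the two preceding propositions: it reduces to checking that their hypothesis lists already exhaust all irreducible non-weight $\L$-modules constructed in \cite{H,CH,LGW,LLZ,TZ1,MZ,LZ2}, after which the non-isomorphism follows immediately by direct appeal.

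First I would enumerate the relevant families. The module $\mathcal{M}\big(V',\mu,\Omega(\lambda,\alpha)\big)$ is introduced in \cite{CH}; the module ${\mathcal M}(V',\gamma(t))$ is from \cite{LLZ}; the tensor products $\bigotimes_{j=1}^n\Omega(\mu_j,\beta_j)\otimes M$, $\bigotimes_{j=1}^n\Phi(\mu_j,\beta_j,h_j(t))\otimes M$ and $\bigotimes_{i=1}^n\Phi(\mu_i,\beta_i,h_i(t))\otimes\bigotimes_{j=1}^q\Omega(\nu_j,\gamma_j)\otimes M$ are constructed in \cite{TZ1}, \cite{H} and \cite{LGW} respectively; the module $A_b$ (and, as a degenerate case, $\Omega(\lambda,\alpha)$) comes from \cite{LZ2}; and the locally nilpotent irreducible modules $M$ come from \cite{MZ}. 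Since $\Omega(\lambda,\alpha)$ is already recovered by taking $n=1$ and letting $M$ be trivial in the $\bigotimes\Omega\otimes M$ family, the enumeration coincides exactly with the union of target families treated in Proposition \ref{5.1} and in the proposition immediately preceding this corollary.

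Then I would simply apply Proposition \ref{5.1} to rule out isomorphism with $\mathcal{M}\big(V',\mu,\Omega(\lambda,\alpha)\big)$, and the preceding proposition to rule out isomorphism with each of the remaining families. No further calculation is required at this stage, because the hypothesis ``$V$ infinite-dimensional'' in the statement of the corollary aligns precisely with the standing assumption used in both propositions (which forces $\bar L_r$ to act nontrivially and hence lets the comparison-of-highest-degree-in-$k$ arguments there go through).

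The only potential obstacle is a completeness check: ensuring that no construction in the seven cited papers falls outside the list above. This is a reading exercise rather than a technical difficulty, and if a referee is uncertain I would add one short sentence per reference pinpointing where each module class appears in the target list. Modulo this bookkeeping, the corollary is a one-line consequence of the two preceding propositions.
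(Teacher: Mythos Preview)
Your proposal is correct and matches the paper's approach exactly: the paper states the corollary with no proof, introducing it with ``Now we can conclude this section with the following corollary,'' treating it as an immediate consequence of Proposition~\ref{5.1} and the proposition right before it. Your enumeration of which module family comes from which reference is accurate, and the observation that $\Omega(\lambda,\alpha)$ is subsumed by the $\bigotimes\Omega\otimes M$ family with $M$ trivial fills in the only point the paper leaves implicit.
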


\section*{Acknowledgements}
This work was supported by the National Natural Science Foundation of China (No. 11801369,   11871421, 11971350, 12171129), the CSC (No. 202006265002), the Overseas Visiting Scholars Program
of Shanghai Lixin University of Accounting and Finance (No. 2021161),   the Fundamental Research Funds for the Central Universities and the Scientific Research Foundation of Hangzhou Normal University (No. 2019QDL012). The authors would like to thank the referee very much for  many helpful comments and suggestions.

\section*{Data availability}
The data that support the findings of this study are available from the corresponding author upon reasonable request.

\small 
\begin{comment}\bigskip

Haibo Chen
\vspace{2pt}

  School of  Statistics and Mathematics, Shanghai Lixin University of  Accounting and Finance,   Shanghai
201209, China

\vspace{2pt}
hypo1025@163.com

\bigskip
Jianzhi Han
\vspace{2pt}

 School of Mathematical Sciences, Tongji University, Shanghai 200092, China.

 \vspace{2pt}
jzhan@tongji.edu.cn

\bigskip
Yanyong Hong
\vspace{2pt}

School of Science, Hangzhou Normal University, Hangzhou 311300, China

\vspace{2pt}
hongyanyong2008@yahoo.com

\bigskip

Yucai Su
\vspace{2pt}

  School of Mathematical Sciences, Tongji University, Shanghai
200092, China
\vspace{2pt}

ycsu@tongji.edu.cn\end{comment}

\end{document}